\newcommand{\sm}[4]{\left(\begin{smallmatrix}#1&#2\\ #3&#4
\end{smallmatrix} \right)}
\newcommand{\field}[1]{\mathbb{#1}}
\newcommand{\N}{\field{N}}
\newcommand{\Z}{\field{Z}}
\newcommand{\R}{\field{R}}
\newcommand{\C}{\field{C}}
\newcommand{\Q}{\field{Q}}
\newcommand{\sgn}{\mathrm{sgn}}
\newcommand{\SL}{\operatorname{SL}}
\newcommand{\bea}{\begin{eqnarray}}
\newcommand{\eea}{\end{eqnarray}}
\newcommand{\be}{\begin {equation}}
\newcommand{\ee}{\end{equation}}
\newcommand{\spt}{\text{spt}}
\newcommand{\aqprod}[3]{\left(#1;#2\right)_{#3}}
\numberwithin{equation}{section}
\newtheorem{theorem}{Theorem}
\numberwithin{theorem}{section}
\newtheorem{lemma}[theorem]{Lemma}
\newtheorem{corollary}[theorem]{Corollary}
\theoremstyle{remark}
\newtheorem*{remark}{Remark}
\theoremstyle{definition}
\newtheorem*{definition}{Definition}
\renewenvironment{proof}[1][Proof]{\begin{trivlist} \item[\hskip \labelsep {\bfseries #1:}]}{\qed\end{trivlist}}
\title{On a modularity conjecture of Andrews, Dixit, Schultz, and Yee for a variation of Ramanujan's $\omega(q)$}
\author{Kathrin Bringmann, Chris Jennings-Shaffer, and  Karl Mahlburg}
\address{Mathematical Institute, University of Cologne, Weyertal 86-90, 50931 Cologne, Germany}
\email{kbringma@math.uni-koeln.de}
\address{Department of Mathematics, Oregon State University, Kidder Hall 368, Corvallis, Oregon, USA}
\email{jennichr@math.oregonstate.edu}
\address{Department of Mathematics, Louisiana State University, Baton Rouge, Louisiana, USA}
\email{mahlburg@mat.lsu.edu}
\begin{document}

\thanks{The research of the first author is supported by the Alfried Krupp Prize for Young University Teachers of the Krupp foundation and the research leading to these results receives funding from the European Research Council under the European Union's Seventh Framework Programme (FP/2007-2013) / ERC Grant agreement n. 335220 - AQSER}

\maketitle

\begin{abstract}
We analyze the mock modular behavior of $\bar{P}_\omega(q)$,
a partition function introduced by
Andrews, Dixit, Schultz, and Yee. This function arose in a study of
smallest parts functions related to classical third order mock theta functions, one of which
is $\omega(q)$. We find that the modular completion of $\bar{P}_\omega(q)$ is not simply
a harmonic Maass form, 
but is instead the derivative of a linear combination of products of various harmonic Maass forms and theta functions.
We precisely describe its behavior under modular transformations and find that
the image under the Maass lowering operator lies in a relatively simpler space.
\end{abstract}

\section{Introduction and statement of results}	
Andrews introduced the smallest parts partition function $\spt(n)$ in \cite{Aspt}, which enumerates the partitions of $n$ weighted by the multiplicity of their smallest parts. In the following years, a large volume of subsequent work has established its importance as a rich source for study, with many interesting examples of combinatorial and algebraic results \cite{Gar10, Ono}, connections to modular forms \cite{FO}, as well as generalized families of spt-functions \cite{DY, Gar11}. A particularly striking feature of the smallest parts function is found in the automorphic properties of its generating function \cite{Br}. Indeed, the generating function provides a natural example of a {\it mock theta function}, in the modern sense; i.e., the holomorphic part of a harmonic Maass form \cite{BruF04, Zag09}. The classical notion of a mock theta function was based on Ramanujan's last letter to Hardy \cite{Wat36}, and these functions are now understood in the framework of real-analytic modular forms thanks to Zwegers' seminal Ph.D. thesis \cite{ZwegersPhD}. In particular, in Theorem 4 of \cite{Aspt}, Andrews showed that
\begin{equation}
\label{E:sptAppell}
\sum_{n \geq 1} \spt(n) q^n = \frac{1}{(q;q)_\infty} \sum_{n \geq 1} \frac{nq^n}{1-q^n}
+ \frac{1}{(q;q)_\infty} \sum_{n \geq 1} \frac{(-1)^n q^{\frac{n(3n+1)}{2}}(1+q^n)}{(1-q^n)^2},
\end{equation}
where throughout the paper we use the standard $q$-factorial notation
$$
	(a)_n=(a;q)_n
	:=\prod_{j=0}^{n-1}\left(1-aq^j\right)$$ for $n\in\N_0\cup\{\infty\}.$
The key feature of \eqref{E:sptAppell} is that (up to rational powers of $q$) it expresses the generating function as a
derivative of a linear combination of theta functions and Appell-Lerch sums, which are the main components of Zwegers' work \cite{ZwegersPhD} (see Section \ref{S:Prelim} below).

In a recent paper \cite{ADY}, Andrews, Dixit, and Yee considered the question of constructing smallest parts partition functions directly from mock theta functions, and proved new results arising from the classical mock theta functions $\omega(q), \nu(q),$ and $\phi(q)$. Originally $\omega$ is defined as
$$
\omega(q):=\sum_{n\geq 0} \frac{q^{2n(n+1)}}{\left(q;q^2\right)_{n+1}^2};
$$
in Theorem 3.1 of \cite{ADY}, they proved the new representation
\begin{equation}
\label{E:qomega}
 P_\omega(q) := q \omega(q) = \sum_{n \geq 1} \frac{q^n}{\left(1 - q^n\right) \left(q^{n+1}; q\right)_n \left(q^{2n+2}; q^2\right)_\infty}.
\end{equation}
Basic combinatorial arguments show that the series \eqref{E:qomega} is the generating function for $p_\omega(n)$, which enumerates the partitions of $n$ in which the odd parts are less than twice the smallest part. Furthermore, this naturally gives a corresponding smallest parts function, which is a weighted count of each partition by the multiplicity of its smallest part:
\begin{equation*}
\sum_{n \geq 1} \spt_\omega(n) q^n := \sum_{n \geq 1} \frac{q^n}{\left(1 - q^n\right)^2 \left(q^{n+1}; q\right)_n \left(q^{2n+2}; q^2\right)_\infty}.
\end{equation*}
We note that this spt-function first appeared in the literature as $\spt_o^+(n)$ in \cite{Patkowski}, where Patkowski studied two Bailey pair identities that are directly related to Andrews' original smallest parts function. It also appears as $\spt_{C1}(n)$ in \cite{GJS}, where many examples of spt-functions were derived from Slater's extensive list of partition identities arising from Bailey pairs \cite{Sla}.

One of the main results arising from these various studies of $\spt_\omega(n)$ states that its generating function has a representation in terms of Appell-Lerch sums. Specifically, Theorem 1 in \cite{Patkowski} and Lemma 6.1 in \cite{ADY} provide two different proofs of the identity
\begin{equation}
\label{E:sptomega}
\sum_{n \geq 1} \spt_\omega(n) q^n =
\frac{1}{\left(q^2; q^2\right)_\infty} \sum_{n\geq 1} \frac{nq^n}{1-q^n}
+ \frac{1}{\left(q^2; q^2\right)_\infty} \sum_{n \geq 1} \frac{(-1)^n \left(1 + q^{2n}\right) q^{n(3n+1)}}{\left(1-q^{2n}\right)^2}.
\end{equation}
As discussed further below, this is essentially a mock modular form of weight $\frac{3}{2}$; note that this also yields a Hecke-type (indefinite) theta series representation by expanding the denominators using geometric series.

In \cite{ADSY}, Andrews, Dixit, Schultz, and Yee considered an overpartition analog of $p_\omega(n)$ and $\spt_\omega(n)$. In particular, they defined $\bar{p}_\omega(n)$ to be the number of overpartitions of $n$ such that all odd parts are less than twice the smallest part, and in which the smallest part is always overlined. The generating function in this case is
\begin{equation}\label{pbargen}
	\bar{P}_\omega(q):=\sum_{n\geq1} \bar{p}_\omega(n)q^n	 =\sum_{n\geq1}\frac{\left(-q^{n+1};q\right)_n\left(-q^{2n+2};q^2\right)_\infty}
{\left(1-q^n\right)\left(q^{n+1};q\right)_n\left(q^{2n+2};q^2\right)_\infty}q^n,
\end{equation}
with corresponding smallest parts function
\begin{equation}\label{E:ospt}
	\sum_{n\geq1} \bar{\spt}_\omega(n)q^n =\sum_{n\geq1}\frac{\left(-q^{n+1};q\right)_n\left(-q^{2n+2};q^2\right)_\infty}
{\left(1-q^n\right)^2\left(q^{n+1};q\right)_n\left(q^{2n+2};q^2\right)_\infty}q^n.
\end{equation}
The generating function in \eqref{E:ospt} previously appeared in \cite{JS}, where several additional families of spt-functions arising from Bailey pairs were studied systematically. Indeed, one of the functions defined in \cite{JS} is
\begin{equation*}
\sum_{n \geq 1} \spt_{G2}(n) q^n :=
\sum_{n \geq 1} \frac{q^n}{\left(1-q^n\right)^2 \left(q^{n+1}; q\right)^2_n \left(q^{2n+2}; q^2\right)_\infty \left(q^{4n+2}; q^4\right)_\infty},
\end{equation*}
and a short calculation shows that this is (termwise) equivalent to \eqref{E:ospt}.

Theorem 5.1 of \cite{ADSY} shows that, just like \eqref{E:sptAppell} and \eqref{E:sptomega}, the overpartition analog also has a representation in terms of Appell-Lerch sums (and hence well-understood modularity properties), namely
\begin{align*}
\sum_{n \geq 1} \bar{\spt}_\omega(n) q^n =
\frac{\left(-q^2; q^2\right)_\infty}{\left(q^2; q^2\right)_\infty}\sum_{n \geq 1} \frac{n q^n}{1-q^n}
+ 2 \frac{\left(-q^2; q^2\right)_\infty}{\left(q^2; q^2\right)_\infty} \sum_{n \geq 1} \frac{(-1)^n q^{2n(n+1)}}{\left(1-q^{2n}\right)^2}.
\end{align*}
However, there is a significant difference between $\bar{P}_\omega(q)$ and $P_\omega(q)$, as the modularity properties of $P_\omega(q)$
come directly from the mock theta function $\omega(q)$ in \eqref{E:qomega}. Indeed, Andrews, Dixit, Schultz, and Yee commented on the difficulty of relating $\bar{P}_\omega(q)$ to modular forms in \cite{ADSY}, and raised the question of whether this is even possible as Problem 1.

The main result of the present paper provides an affirmative answer to the question of Andrews, Dixit, Schultz, and Yee on the modularity properties of $\bar{P}_\omega$.
\begin{theorem}\label{mock}
The function $\bar{P}_\omega(q) + \frac14 - \frac{\eta(4\tau)}{2 \eta(2\tau)^2}$, where $\eta(\tau):=q^{\frac1{24}}\prod_{n\geq 1} (1-q^n)$ ($q:=e^{2\pi i\tau}$ throughout) is Dedekind's $\eta$-function, can be completed
to $\widehat{P}_\omega(\tau)$, which transforms like a weight $1$ modular form.
\end{theorem}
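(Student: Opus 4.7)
The plan is to recast $\bar{P}_\omega(q)$ as a $\tau$-derivative of a classical mock modular object (a product of a Zwegers $\mu$-function and a theta function), up to correction terms of purely modular origin, and then invoke Zwegers' completion framework to exhibit the weight $1$ transformation. As a first step, I would simplify \eqref{pbargen} using the standard $q$-Pochhammer identities $(-q^{n+1};q)_n = \tfrac{(-q;q)_{2n}}{(-q;q)_n}$ and $(1-q^n)(q^{n+1};q)_n = \tfrac{(q;q)_{2n}}{(q;q)_{n-1}}$, together with the splittings $(q;q)_{2n} = (q;q^2)_n(q^2;q^2)_n$ and $(-q;q)_{2n} = (-q;q^2)_n(-q^2;q^2)_n$, producing
\[
\bar{P}_\omega(q) = \frac{(-q^2;q^2)_\infty}{(q^2;q^2)_\infty}\sum_{n\geq 1}\frac{(-q;q^2)_n(q;q)_{n-1}}{(-q;q)_n(q;q^2)_n}q^n.
\]
Noting that the prefactor satisfies $\tfrac{(-q^2;q^2)_\infty}{(q^2;q^2)_\infty} = \tfrac{\eta(4\tau)}{\eta(2\tau)^2}$, precisely the eta-quotient appearing in the correction term, gives a strong structural hint about where the correction originates.

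The decisive step is to establish an identity of the form
\[
\bar{P}_\omega(q) + \tfrac14 - \tfrac{\eta(4\tau)}{2\eta(2\tau)^2} = q\tfrac{d}{dq}\bigl[G(\tau)\bigr],
\]
where $G(\tau)$ is a linear combination of products of specializations of Zwegers' $\mu$-function with classical theta quotients. I would approach this either directly via Bailey pair manipulations (noting that $\bar{\spt}_\omega$ has already been treated via Bailey pairs in \cite{JS} and \cite{Patkowski}), or via the two-variable generating function $F_\omega(z,q) := \sum_{n\geq 1} \tfrac{zq^n(-q^{n+1};q)_n(-q^{2n+2};q^2)_\infty}{(1-zq^n)(q^{n+1};q)_n(q^{2n+2};q^2)_\infty}$, whose $z$-derivative at $z=1$ recovers the $\bar{\spt}_\omega$ series treated in \cite{ADSY}, while $F_\omega(1,q) = \bar{P}_\omega(q)$. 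In either case the identification of the precise correction $\tfrac14 - \tfrac{\eta(4\tau)}{2\eta(2\tau)^2}$ should arise from carefully tracking boundary contributions: the $n=0$ term of a re-indexed summation, constants from partial-fraction splittings of Appell--Lerch summands, and so on.

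Once $G(\tau)$ is identified, I would replace each occurrence of $\mu$ by its Zwegers completion $\hat\mu$, a real-analytic Jacobi form of weight $\tfrac12$, thereby obtaining a non-holomorphic completion $\widehat{G}(\tau)$. Since the $\tau$-derivative preserves the modular transformation law while raising the weight by $2$, the function $\widehat{P}_\omega(\tau) := q\tfrac{d}{dq}\bigl[\widehat{G}(\tau)\bigr]$ will transform as a weight $1$ modular form. Because the transformations of $\hat\mu$, $\vartheta$, and $\eta$ are all explicit, verifying the weight $1$ law (and confirming that the holomorphic part of $\widehat{P}_\omega$ matches $\bar{P}_\omega + \tfrac14 - \tfrac{\eta(4\tau)}{2\eta(2\tau)^2}$) reduces to careful bookkeeping of characters and multiplier systems on an appropriate congruence subgroup.

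The principal obstacle is the identity in the second paragraph: unlike $\bar{\spt}_\omega$, which is a \emph{direct} linear combination of Appell--Lerch sums, $\bar{P}_\omega$ apparently requires the additional structure of a $q$-derivative, and pinning down $G(\tau)$ together with the exact correction $\tfrac14 - \tfrac{\eta(4\tau)}{2\eta(2\tau)^2}$ is delicate. A secondary difficulty is that, because $\widehat{P}_\omega$ is the $\tau$-derivative of a non-holomorphic object, one must carefully separate the additional contributions produced when $\partial_\tau$ acts on the non-holomorphic completion of $\hat\mu$, and then verify that the image under the Maass lowering operator lies in the relatively simpler space advertised in the abstract.
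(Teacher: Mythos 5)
Your overall philosophy (express $\bar{P}_\omega$ through Zwegers' $\mu$-function, complete, and track holomorphic versus non-holomorphic pieces) is in the right spirit, but the proposal has two substantive gaps, and the central identity is left entirely unestablished. First, the mechanism you propose does not work as stated: $q\frac{d}{dq}=\frac{1}{2\pi i}\frac{\partial}{\partial\tau}$ does \emph{not} preserve modular transformation laws while raising the weight by $2$ unless the input has weight $0$ (otherwise an anomalous term proportional to $c(c\tau+d)^{k+1}f(\tau)$ appears, or one must pass to the non-holomorphic raising operator, which injects further non-holomorphic terms you do not account for). The actual construction uses derivatives in the \emph{elliptic} (Jacobi) variable: $\bar{P}_\omega$ is realized as $\frac{i}{4\eta^3}$ times a combination of $\left[\frac{\partial}{\partial\zeta}(\cdots)\right]_{\zeta=1}$ and $\left[\zeta\frac{\partial}{\partial\zeta}(\cdots)\right]_{\zeta=q}$ applied to a completed Jacobi-type object of weight $\frac32$, and it is the Taylor coefficients $\mathcal{F}'(0)^2$, $\mathcal{F}''(0)$ at torsion points that carry weight $1$ after dividing by $\eta^6$. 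Second, the shape you posit for $G$ --- a linear combination of single specializations of $\mu$ times theta quotients --- cannot be correct: such an object (or any elliptic derivative of it) would be an ordinary (mixed) mock modular form, whereas the paper's Remark emphasizes that $\xi_1(\widehat{P}_\omega)$ is \emph{not} weakly holomorphic; the correct structure necessarily involves products of \emph{two} $\widehat\mu$-functions. This quadratic-in-$\mu$ structure is forced by the rank-$3$ indefinite theta representation of $\bar{P}_\omega$ (obtained via coefficient extraction with the finite Jacobi triple product and Heine's transformation, then geometric expansion), combined with the Bringmann--Rolen--Zwegers identity $F(z_1,z_2,z_3)=i\vartheta(z_1)\mu(z_1,z_2)\mu(z_1,z_3)-\frac{\eta^3\vartheta(z_2+z_3)}{\vartheta(z_2)\vartheta(z_3)}\mu(z_1,z_2+z_3)$. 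Without this input, the "decisive step" you flag as the principal obstacle is not merely delicate --- it is unreachable in the form you propose.

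A smaller but still genuine misplacement: you expect the correction $\frac14-\frac{\eta(4\tau)}{2\eta(2\tau)^2}$ to arise from boundary terms ($n=0$ contributions, partial-fraction constants) in holomorphic $q$-series manipulations. In fact the holomorphic identity is exact with no correction at that stage; the eta-quotient and the constant $\frac14$ emerge only afterwards, as the \emph{holomorphic components} of the completion defects $\widehat{F}-F$, i.e., of the $R$-function terms $R^*$ evaluated at the special arguments $\frac{\tau}{2}+\frac14+\frac{\alpha}{2}$, where $R$ at these torsion points splits into a purely non-holomorphic piece plus explicit holomorphic eta-quotients. Your first-paragraph rewriting of $\bar{P}_\omega$ with the prefactor $\frac{\eta(4\tau)}{\eta(2\tau)^2}$ is a reasonable observation but is not the route taken, and by itself it does not identify where that quotient actually enters the completion.
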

\begin{remark}
We recall that a mock modular form $f$ of weight $k$ has a modular completion $\widehat{f}$ with the property
that \begin{equation}
\label{xi}
\xi_k\left(\widehat{f}\right):= 2iv^k\ \overline{\frac{\partial}{\partial \overline{\tau}}\widehat f(\tau)}
\end{equation}
is a weakly holomorphic modular form of weight $2-k$ (the ``shadow'' of $f$).
In contrast, here $\xi_1(\widehat{P}_\omega)$ is not a weakly
holomorphic modular form. However,
$L(\widehat{P}_\omega(\tau)):=-2iv^2\frac{\partial}{\partial \overline{\tau}}\widehat{P}_\omega(\tau)$
lies in the space
$${v^{\frac32}\overline{S_{\frac32}(\Gamma_0(4),\chi_1)} H_{\frac12}(\Gamma,\chi_2)
\oplus v^{\frac12}M^!_{-\frac12}(\Gamma_0(4),\chi_3)\overline{M^!_{\frac12}(\Gamma_0(4),\chi_4)}}.
$$	
For this reason we call the function $\bar{P}_\omega(q) +\frac14-\frac12\frac{\eta(4\tau)}{\eta(2\tau)^2}$ a
{\it higher depth mock modular form}, as $L(\widehat{P}_\omega)$ is in a relatively simple space.
Although we use the term somewhat informally in this article, we say that {\it higher depth mock modular forms} are automorphic functions that are characterized and
inductively defined by the key property that their images under ``lowering operators'' essentially
lie in lower depth spaces. For example, the classical mock modular forms have ``depth 1'', since the
Maass lowering operator essentially yields a classical modular form.
Similarly, the main result of this paper shows that $\bar{P}_\omega(q) +\frac14-\frac12\frac{\eta(4\tau)}{\eta(2\tau)^2}$ is a mock modular form of depth 2.
For the precise details of the modular transformation properties of $\widehat{P}_\omega$,
see Theorem \ref{T:Phat}.
\end{remark}

The remainder of the paper is organized as follows.
In Section 2 we give a review of harmonic Maass forms, classical and mock Jacobi forms, and state two
$q$-series identities which we require below. In Section 3 we rewrite $\bar{P}_\omega$ as
an indefinite theta function in the form of a triple sum. In Section 4 we use the results of Section 3
to express $\bar{P}_\omega$ in terms of known mock modular forms, from which
Theorem \ref{mock} follows after a series of calculations.

\section*{Acknowledgments}
The authors thank the referees for their careful reading of the paper and many helpful comments.

\section{Preliminaries}
\label{S:Prelim}
\subsection{Harmonic Maass forms}
In this section we recall basic facts of harmonic Maass forms, first introduced by Bruinier and Funke in \cite{BruF04}. We begin with their definition.
\begin{definition}
For $k\in\frac12\Z$ , a {\it weight $k$ harmonic Maass form for a congruence subgroup  $\Gamma\subset\SL_2(\Z)$} (contained in $\Gamma_0(4)$ if $k\in\frac12+\Z$  )  is any smooth function $f:\mathbb H\rightarrow \C$ satisfying the following conditions:
\begin{enumerate}
	\item For all $\left(\begin{smallmatrix}
		a & b \\ c & d
	\end{smallmatrix} \right)\in \Gamma$, we have
	$$
	f\left(\frac{a\tau+b}{c\tau+d}\right)=
	\begin{cases}
	(c\tau+d)^k f(\tau)&\quad\text{ if } k\in\Z,\\
	\left(\frac{c}{d}\right)\varepsilon_d^{-2k}(c\tau+d)^k f(\tau)&\quad\text{ if } k\in\frac12+\Z.
	\end{cases}
	$$
	Here $(\frac{c}{d})$ is the generalized Legendre symbol in the sense of Shimura \cite{Sh} and
	$\varepsilon_d:=1$ if $d\equiv 1\pmod 4$ and $\varepsilon_d:=i$ if $d\equiv 3\pmod{4}$.
	
	\item We have $\Delta_k(f)=0$, where $\Delta_k$ is the {\it weight $k$ hyperbolic Laplacian} ($\tau = u+iv, u,v \in \R$ throughout)
	$$
	\Delta_k:=-v^2\left(\frac{\partial^2}{\partial u^2}+\frac{\partial^2}{\partial v^2}\right)+ikv\left(\frac{\partial}{\partial u}+i\frac{\partial}{\partial v}\right).
	$$
	\item There exists a polynomial $P_f(\tau)\in\C [q^{-1}]$ such that
	$$
	f(\tau)-P_f(\tau)=O(e^{-\varepsilon v})
	$$
	as $v\to \infty$ for some $\varepsilon >0$. Analogous conditions are required at all cusps.
\end{enumerate}
\end{definition}
\begin{remark}
Similarly one can define harmonic Maass forms of weight $k$ on $\Gamma$ with multiplier.
\end{remark}
Denote the space of such harmonic Maass forms of weight $k$ on $\Gamma$ by $H_k\left(\Gamma\right)$. Every $f\in H_k\left(\Gamma\right)$ has an expansion of the form
$$
f(\tau)=f^+(\tau)+f^-(\tau)
$$
with the {\it holomorphic part} having a $q$-expansion
\begin{equation*}
f^+(\tau)=\sum_{n\in\Q, n\gg -\infty}c^+_f(n)q^n
\end{equation*}
and the {\it non-holomorphic part} having an expansion of the form
\begin{equation*}
f^-(\tau)=\sum_{n\in\Q,n>0}c_f^-(n)\Gamma(1-k,4\pi nv)q^{-n}.
\end{equation*}
Here $\Gamma(s,v)$ is the {\it incomplete gamma function} defined, for $v>0$, as the integral
\begin{equation*}
	\Gamma(s,v) := \int_{v}^\infty t^{s-1}e^{-t} \, dt.
\end{equation*}
The function $f^+$ is called a {\it mock modular form}.
The non-holomorphic part of a harmonic Mass form may be written as a non-holomorphic Eichler integral of a classical modular form,
the so-called \emph{shadow} of the mock modular form. Indeed, if $f$ is a harmonic Maass form of weight $k$, then the shadow of
its holomorphic part can be recovered as $\xi_k (f)$, where $\xi_k$ is definied in \eqref{xi}. This operator maps harmonic Maass forms of weight $k$ to weakly holomorphic modular forms of dual weight $2-k$ and conjugated multipliers. It is related to the \textit{Maass lowering operator} $L:=-2iv^2\frac{\partial}{\partial\overline{\tau}}$, via $\xi_k=v^{k-2}\overline{L}$.
Harmonic Maass forms for which $f^{-}=0$ are weakly holomorphic modular forms and we denote the corresponding space by $M_k^{!}(\Gamma)$.
Additionally we denote the space of cusp forms by $S_k(\Gamma)$. For forms with multiplier, we use the notation
$H_k(\Gamma,\chi)$, $M_k^{!}(\Gamma,\chi)$, and $S_k(\Gamma,\chi)$.
We note that if $f(\tau)$ transforms like a modular form of weight $k$
with multiplier $\chi\begin{psmallmatrix}a&b\\c&d\end{psmallmatrix}$ for a subgroup $\Gamma$ of $\SL_2(\Z)$, then $f(-\bar{\tau})v^k$ transforms like a modular form of weight $-k$
with multiplier $\chi\begin{psmallmatrix}a&-b\\-c&d\end{psmallmatrix}$ on $\Gamma^\ast := \{\begin{psmallmatrix}a&b\\c&d\end{psmallmatrix}:\begin{psmallmatrix}a&-b\\-c&d\end{psmallmatrix}\in \Gamma \}$.
Note that $\Gamma^* = \Gamma$ for the congruence subgroups $\Gamma_0(N).$

\subsection{Jacobi forms and Zwegers' $\mu$-function}
In this section, we recall certain automorphic forms that we encounter in this paper. We start with classical Jacobi forms,
following Eichler and Zagier's seminal work \cite{EZ}.
\begin{definition}\label{harmjacdef} A {\it holomorphic Jacobi form of weight $k$ and index $m$} ($k, m \in \mathbb N$) on a subgroup $\Gamma \subseteq \textnormal{SL}_2(\mathbb Z)$ of finite index is a holomorphic function
$\varphi:\mathbb C \times \mathbb H \to
\mathbb C$ which, for all $\sm{a}{b}{c}{d} \in \Gamma$ and $\lambda,\mu \in \mathbb Z$, satisfies
 \begin{enumerate}\item
$\varphi\left(\frac{z}{c\tau + d};\frac{a\tau+b}{c\tau+d}\right) = (c\tau + d)^k e^{\frac{2\pi i mc z^2}{c\tau + d}} \varphi(z;\tau)$,
\item $\varphi(z + \lambda \tau + \mu;\tau) = e^{-2\pi i m (\lambda^2\tau + 2\lambda z)} \varphi(z;\tau)$,
\item $\varphi(z;\tau)$ has a Fourier expansion of the form $\sum_{n, r} c(n,r)q^n \zeta^r$ ($\zeta:=e^{2\pi iz}$ throughout) with $c(n,r)=0$ unless $n\geq r^2/4m$.
\end{enumerate}\end{definition}
\noindent Jacobi forms with multipliers and of half integral weight, meromorphic Jacobi forms, and weak Jacobi forms are defined similarly with obvious modifications made.

\begin{remark}
When specializing Jacobi forms to torsion points (i.e., $z\in\Q+\Q\tau$) one obtains classical modular forms (with respect to some congruence subgroup of SL$_2(\Z)$).
\end{remark}

A  special Jacobi form   used in this paper is \emph{Jacobi's theta function}, defined by
 \begin{equation*}
 \vartheta(z) =\vartheta(z;\tau):= \sum_{n\in\frac12+\Z}e^{\pi i n^2\tau+2\pi in\left(z+\frac12\right)},
\end{equation*}
where  here and throughout, we may omit the dependence of various functions on the variable $\tau$ if
the context is clear.
This function  is odd and well known to satisfy the following transformation law \cite{Knopp}.
\begin{lemma}\label{THETtrans}
 For $\lambda,\mu \in \mathbb Z$ and
$\sm{a}{b}{c}{d} \in \textnormal{SL}_2(\mathbb Z)$, we have that
\begin{eqnarray*}
 \vartheta(z+\lambda \tau+\mu;\tau)&=&(-1)^{\lambda+\mu}q^{-\frac{\lambda^2}{2}}\zeta^{-\lambda}
\vartheta(z;\tau),\label{tt1}\\ \vartheta\left(\frac{z}{c\tau+d};
\frac{a\tau+b}{c\tau+d}\right)&=&\psi\left(\begin{matrix}a&b\\c&d\end{matrix}\right)^3 (c\tau+d)^{\frac12}e^{\frac{\pi icz^2}{c \tau+d}}\vartheta(z;
\tau), \textnormal{\quad }
\end{eqnarray*}
where $\psi$ is the multiplier of $\eta$, i.e., $\eta(\frac{a\tau+b}{c\tau+d})=\psi\sm{a}{b}{c}{d}(c\tau + d)^{\frac{1}{2}}\eta(\tau)$. More explicitly, we have
\begin{equation*}
\psi\left(\begin{matrix} a&b\\c&d\end{matrix}\right)=
\begin{cases}
\left(\frac{d}{|c|}\right)e^{\frac{\pi i}{12}\left((a+d)c-bd\left(c^2-1\right)-3c \right)}
\qquad & \text{if } c \text{ is odd}, \\
\left(\frac{c}{d}\right)e^{\frac{\pi i}{12}\left(ac\left(1-d^2\right)+d(b-c+3)-3\right)} &\text{if } c \text{  is even}.
\end{cases}
\end{equation*}
\end{lemma}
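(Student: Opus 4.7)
The lemma has three distinct assertions---quasi-periodicity of $\vartheta$, the $\SL_2(\Z)$-transformation, and the explicit formula for the multiplier $\psi$---and I would treat them in that order.

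The quasi-periodicity is a direct series manipulation. Substituting $z\mapsto z+\lambda\tau+\mu$ into the defining series
$$\vartheta(z+\lambda\tau+\mu;\tau)=\sum_{n\in\frac12+\Z}e^{\pi in^2\tau+2\pi in\left(z+\lambda\tau+\mu+\frac12\right)},$$
I would complete the square via $n^2\tau+2n\lambda\tau=(n+\lambda)^2\tau-\lambda^2\tau$ and then reindex $n\mapsto n-\lambda$, which is legitimate because $\lambda\in\Z$ preserves the coset $\frac12+\Z$. The factor $e^{2\pi in\mu}$ collapses to $(-1)^\mu$ since $n\in\frac12+\Z$ and $\mu\in\Z$, while the shift of the index produces $(-1)^\lambda q^{-\lambda^2/2}\zeta^{-\lambda}$ after simplification; collecting these factors yields the stated identity.

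For the modular transformation I would verify the formula on the generators $T=\sm{1}{1}{0}{1}$ and $S=\sm{0}{-1}{1}{0}$ of $\SL_2(\Z)$. For $T$, a direct term-by-term evaluation gives $\vartheta(z;\tau+1)=e^{\pi i/4}\vartheta(z;\tau)$, using $n^2\in\frac14+\Z$ for $n\in\frac12+\Z$. For $S$, the key step is Poisson summation applied to the Gaussian $x\mapsto e^{-\pi x^2 t+2\pi ixw}$ summed over the coset $\frac12+\Z$, carried out first for the special case $\tau=it$, $t>0$; this yields $\vartheta(z/\tau;-1/\tau)=-i(-i\tau)^{1/2}e^{\pi iz^2/\tau}\vartheta(z;\tau)$, which extends to all $\tau\in\mathbb H$ by analytic continuation in $\tau$. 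The case of a general $\gamma\in\SL_2(\Z)$ then follows by writing $\gamma$ as a word in $S$ and $T$, composing the two basic transformations in succession, and tracking the accumulated auxiliary factors, which assemble into a well-defined multiplier system of weight $\frac12$.

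Finally, to identify the explicit form of $\psi$, I would use the Jacobi derivative identity $\frac{\partial}{\partial z}\vartheta(z;\tau)\big|_{z=0}=-2\pi\eta(\tau)^3$ and differentiate the modular transformation from the previous paragraph in $z$ at $z=0$. Because the $z$-derivative of $e^{\pi icz^2/(c\tau+d)}$ vanishes at $z=0$, the differentiation produces $\eta(\gamma\tau)^3=\psi^3(c\tau+d)^{3/2}\eta(\tau)^3$, so the multiplier of $\vartheta$ is exactly the cube of the Dedekind $\eta$-multiplier. The closed-form expression for $\psi$ stated in the lemma is then the classical formula for the $\eta$ multiplier (see \cite{Knopp}), which can be derived by tracking how $\psi$ evaluated on $T$ and $S$ propagates through a word decomposition of $\gamma$, or equivalently via the Dedekind sum $s(d,c)$. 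I expect this last step---converting the Dedekind-sum expression into the Legendre symbol and quadratic exponent representation displayed in the lemma---to be the main obstacle, since the case split on the parity of $c$ reflects the standard reciprocity subtleties and requires careful bookkeeping.
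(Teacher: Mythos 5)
Your outline is mathematically correct, but note that the paper does not prove this lemma at all: it is stated as a classical fact with a citation to \cite{Knopp}, so there is no in-paper argument to compare against. Your three steps are all sound --- the reindexing $n\mapsto n-\lambda$ within the coset $\tfrac12+\Z$ for the elliptic shift, the generator-plus-Poisson-summation argument for the modular transformation, and in particular the use of $\vartheta'(0;\tau)=-2\pi\eta(\tau)^3$ to pin the multiplier down as exactly $\psi^3$. That last device is the cleanest part of your proposal, since it identifies the multiplier in one stroke and avoids tracking half-integral-weight cocycle factors through a word decomposition in $S$ and $T$; the only residual work is the classical closed form for the $\eta$-multiplier itself, which, as you say, is exactly what \cite{Knopp} supplies.
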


Zwegers' mock Jacobi forms do not quite satisfy the elliptic and modular
transformations given in (1) and (2)
in the above definition of holomorphic Jacobi forms, but instead must be completed by adding a certain non-holomorphic function in order to satisfy suitable transformation laws.  To describe the simplest case, we define for $z_1, z_2 \in\C\setminus (\Z+\Z\tau)$ and $\tau\in\mathbb{H}$ the function
\[
\mu(z_1, z_2)=\mu(z_1, z_2; \tau):=\frac{e^{\pi iz_1}}{\vartheta(z_2; \tau)}\sum_{n\in\Z}
\frac{(-1)^ne^{2\pi inz_2}q^{\frac{n^2+n}{2}}}{1-e^{2\pi iz_1}q^n}
\]
and its completion
\begin{equation}\label{muhat}
\widehat{\mu}(z_1, z_2)=\widehat{\mu}(z_1, z_2; \tau):=\mu(z_1, z_2;\tau)+\frac{i}{2}R(z_1-z_2;\tau).
\end{equation}
Here the non-holomorphic part is given by $(\tau=u+iv,\ z=x+iy)$ the even function
\[
R(z)=R(z; \tau):=\sum_{n\in\frac12+\Z}\left(\sgn(n)-E\left(\left(n+\frac{y}{v}\right)\sqrt{2v}\right)\right)
(-1)^{n-\frac12}q^{-\frac{n^2}{2}}\zeta^{-n}
\]
with
\begin{equation}
\label{E:Edef}
E(w):=2\int_0^w e^{-\pi t^2}dt.
\end{equation}
For real arguments $w$, this can be expressed in terms of the incomplete gamma function, as
$$
E(w) = \sgn(w) \left(1 - \frac{1}{\sqrt{\pi}}\Gamma\left(\frac12, \pi w^2\right)\right).
$$
We also note that for certain values of $z$, $\tau\mapsto R(z;\tau)$ is in fact (weakly) holomorphic.
We have the following transformation laws.
\begin{lemma}[Zwegers \cite{ZwegersPhD}]\label{muhattranlem}
We have the symmetry relations
\begin{align*}
\widehat{\mu}(z_1,z_2)=\widehat{\mu}(z_2,z_1)
,\quad
\widehat{\mu}(-z_1,-z_2)=\widehat{\mu}(z_1,z_2).
\end{align*}
For $\left(\begin{smallmatrix} a & b \\ c & d \end{smallmatrix}\right)\in \mathrm{SL}_2(\Z)$ and $r_1,r_2,s_1,s_2 \in \mathbb Z$, we have
\begin{align*}
\widehat{\mu}\left(\frac{z_1}{c\tau+d}, \frac{z_2}{c\tau+d}; \frac{a\tau+b}{c\tau+d}\right)
&=\psi\left(\begin{matrix}a&b\\c&d\end{matrix}\right)^{-3}
(c\tau+d)^{\frac12}
e^{-\frac{\pi ic(z_1-z_2)^2}{c\tau+d}}\widehat{\mu}(z_1, z_2; \tau), \\
\widehat{\mu}\left(z_1 + r_1\tau + s_1,z_2 + r_2\tau + s_2\right) &= (-1)^{r_1+s_1+r_2+s_2} e^{\pi i (r_1-r_2)^2 \tau + 2\pi i (r_1-r_2)(z_1-z_2)} \widehat{\mu}(z_1,z_2).
\end{align*}

Moreover $R$ satisfies the elliptic shifts
\begin{align*}
R(z+1)&=-R(z),\\
R(z+\tau)&= -\zeta q^\frac12 R(z)+2\zeta^{\frac12} q^\frac38.
\end{align*}

For the function $\mu$, we have
\begin{align*}
\mu(z_1,z_2)&=\mu(z_2,z_1)
,\\
\mu(z_1+\tau,z_2)&=-e^{2\pi i(z_1-z_2)+\pi i\tau}\mu(z_1,z_2)-ie^{\pi i(z_1-z_2)+\frac{3\pi i\tau}{4}}.
\end{align*}
\end{lemma}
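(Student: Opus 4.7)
The plan is to verify the six identities separately, exploiting the fact that the non-holomorphic correction $R$ is constructed precisely so that its defects under elliptic shifts and under modular substitutions cancel those of $\mu$. Consequently, the three statements for $\widehat\mu$ reduce to the corresponding (and in general simpler) statements for $\mu$ and $R$ taken individually.

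For the elliptic shifts and the symmetries of $\mu$, the natural approach is direct manipulation of the defining series. The identity $R(z+1)=-R(z)$ is immediate from the factor $(-1)^{n-\frac12}$ in the series. For $R(z+\tau)$, I would shift the summation index $n\to n-1$ to absorb the factor $\zeta^{-1}$ arising from the shift, use the translation behavior of $E$ with respect to $\sqrt{2v}$ built into the definition \eqref{E:Edef}, and pick up the boundary term $2\zeta^{\frac12}q^{\frac38}$ from the unique index where $\sgn$ changes sign. The elliptic shift of $\mu(z_1+\tau,z_2)$ is handled analogously, using the elliptic transformation of $\vartheta$ from Lemma \ref{THETtrans}; once both identities are in hand, the elliptic transformation of $\widehat\mu$ follows because the explicit correction terms contributed by $\mu$ and $\frac{i}{2}R$ cancel exactly. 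The symmetries $\widehat\mu(z_1,z_2)=\widehat\mu(z_2,z_1)$ and $\widehat\mu(-z_1,-z_2)=\widehat\mu(z_1,z_2)$ reduce, via the evenness of $R$, to the analogous symmetries of $\mu$, which can be established in turn by a partial-fraction rearrangement of the defining series (or equivalently by invoking the Jacobi triple product expansion of $\vartheta$ in the denominator).

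The main obstacle, and the deepest content of the lemma, is the modular transformation of $\widehat\mu$. My plan is to introduce a non-holomorphic Eichler integral representation of $R$ in terms of $\vartheta$: up to normalization, $R(z;\tau)$ may be written as an integral of $\vartheta(z;w)/\sqrt{-i(w+\tau)}$ along a vertical contour starting at $w=-\overline{\tau}$. The weight-$\frac12$ modular transformation of $\vartheta$ from Lemma \ref{THETtrans}, combined with a change of variable in the integral, then produces a matching transformation for $R$ with the $\eta$-multiplier $\psi^{-3}$ and a factor $e^{-\pi ic(z_1-z_2)^2/(c\tau+d)}$. On the other hand, the modular defect of $\mu$ itself is expressible as a Mordell-type integral; the crux of the argument is to show that this Mordell defect cancels exactly against the corresponding contribution from $R$, so that the completed object $\widehat\mu$ transforms cleanly. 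By the cocycle property it suffices to verify this on the generators $S$ and $T$ of $\SL_2(\Z)$, with the $T$-case being essentially immediate, and the $S$-case reducing precisely to the evaluation of the Mordell integral alluded to above.
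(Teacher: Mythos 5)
The paper does not prove this lemma at all: it is imported verbatim from Zwegers' thesis \cite{ZwegersPhD}, so the only thing to compare your plan against is Zwegers' original argument --- which your outline essentially reconstructs, and correctly so in its overall architecture. The reduction of the statements for $\widehat\mu$ to separate statements for $\mu$ and $R$, the index-shift derivation of the elliptic shifts of $R$ (with the boundary term $2\zeta^{1/2}q^{3/8}$ coming from the single half-integer where $\sgn$ jumps), the exact cancellation of the inhomogeneous terms $-ie^{\pi i(z_1-z_2)+3\pi i\tau/4}$ and $\tfrac{i}{2}\cdot 2\zeta^{1/2}q^{3/8}$, and the verification of modularity on the generators $S$ and $T$ with the Mordell integral $h(z;\tau)=\int_{\R}e^{\pi i\tau x^2-2\pi zx}/\cosh(\pi x)\,dx$ appearing as the common modular defect of $\mu$ and $R$ --- this is precisely Propositions 1.4, 1.5, 1.9 and Theorem 1.11 of \cite{ZwegersPhD}. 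Two points in your sketch are softer than you suggest. First, the Eichler-integral representation of $R$ does not have $\vartheta(z;w)$ itself in the numerator: at torsion points the integrand is the weight-$\tfrac32$ unary theta function $g_{a,b}(w)=\sum_{n\in a+\Z}ne^{\pi in^2w+2\pi inb}$ (morally a $z$-derivative of $\vartheta$), and for non-torsion $z$ the cleanest route is Zwegers' direct computation of the $S$-transform of $R$ against $h$, or the differential characterization $\tfrac{\partial R}{\partial\overline{\tau}}(z;\tau)=\tfrac{i}{\sqrt{2v}}e^{-2\pi y^2/v}\overline{\vartheta(z;\tau)}$; as literally stated your integral would not reproduce the multiplier $\psi^{-3}$. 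Second, the symmetry $\mu(z_1,z_2)=\mu(z_2,z_1)$ is not a mere partial-fraction rearrangement of the Lerch sum; it requires a genuine theta identity, or Zwegers' argument that the difference is an entire doubly quasi-periodic function which must vanish. Neither issue is fatal, but both would need to be filled in for a complete proof.
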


We note that when restricting $z_1$ and $z_2$ to torsion points, $\mu$ gives rise to a harmonic Maass form of weight $\frac{1}{2}$ (see Corollary 8.15 in \cite{BFOR}). The corresponding shadow is obtained from
\begin{equation}
\label{E:dtauR}
\frac{\partial}{\partial\overline{\tau}}R(a\tau+b)=\frac{i}{\sqrt{2v}}e^{-2\pi a^2v}\sum_{n\in\frac12+\Z}(-1)^{n+\frac12}(n+a)
e^{-\pi in^2\overline{\tau}-2\pi in(a\overline{\tau}+b)}.
\end{equation}
Thus the shadow of $\mu(z_1, z_2; \tau)$, for $z_j$ specialized to torsion points, may be written in terms of the weight 3/2 unary theta function ($a,b\in\Q$)
\[
\sum_{n\in a+\Z} n e^{2\pi inb} q^{\frac{n^2}{2}}.
\]
There is a similar identity for derivatives of $R$.
To be more precise, we compute
\begin{equation}
\label{E:dtaudzALT}
\frac{\partial}{\partial\overline{\tau}}\left[\frac{\partial}{\partial z} R(z)\right]_{z=a\tau+b}
=
	\frac{e^{-2\pi a^2v}}{2\sqrt{2v}}
	\sum_{n\in\frac{1}{2}+\mathbb Z}
		(-1)^{n-\frac{1}{2}} 	 \left(\frac{1}{v}+4\pi a(a+n) \right)e^{-\pi in^2\overline{\tau}-2\pi i n(a\overline{\tau}+b)}	.
\end{equation}


\subsection{Combinatorial results}

In our initial identities for $\bar{P}_\omega$, we need two standard $q$-series identities.
The first one is a finite version of the Jacobi triple product
identity \cite[p. 49 ex. 1]{A}.
\begin{lemma}\label{FJTP}
For $n\in\N_0$, we have that
\begin{align*}
	 \frac{\left(\zeta,\zeta^{-1}q\right)_n}{(q)_{2n}}
	&=
	\sum_{j=-n}^n
	\frac{(-1)^j \zeta^j q^{\frac{j(j-1)}{2}}}{(q)_{n-j}(q)_{n+j}}
	.
\end{align*}
\end{lemma}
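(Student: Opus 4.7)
The plan is to prove this finite Jacobi triple product identity by expanding the two $q$-Pochhammer symbols on the left-hand side via the finite $q$-binomial theorem, and then collapsing the resulting double sum with the $q$-Vandermonde--Chu identity.

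First I would write
$$(\zeta;q)_n = \sum_{k=0}^n (-\zeta)^k q^{\binom{k}{2}} \binom{n}{k}_q, \qquad (\zeta^{-1}q;q)_n = \sum_{\ell=0}^n (-\zeta^{-1})^\ell q^{\binom{\ell+1}{2}} \binom{n}{\ell}_q,$$
multiply the two series, and reindex by $j := k - \ell$, keeping $\ell$ as the inner summation variable. The $\zeta$-powers combine to give $\zeta^j$, the signs reduce to $(-1)^j$, and a short arithmetic check produces the key exponent identity $\binom{k}{2}+\binom{\ell+1}{2}=\binom{j}{2}+\ell(j+\ell)$. This expresses the product as
$$(\zeta;q)_n(\zeta^{-1}q;q)_n = \sum_{j} (-1)^j \zeta^j q^{\binom{j}{2}} \sum_\ell q^{\ell(j+\ell)}\binom{n}{j+\ell}_q\binom{n}{\ell}_q.$$

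The next step is to identify the inner sum as an instance of the $q$-Vandermonde--Chu identity
$$\binom{m+n}{r}_q = \sum_i q^{i(n-r+i)}\binom{m}{r-i}_q\binom{n}{i}_q,$$
taken with $m=n$ and $r=n-j$, which yields the evaluation $\binom{2n}{n-j}_q = (q)_{2n}/[(q)_{n-j}(q)_{n+j}]$. Dividing through by $(q)_{2n}$ then recovers the right-hand side of the claim, and the summation range collapses automatically to $-n\le j\le n$ since the Gaussian binomial coefficient vanishes outside that range.

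The main obstacle is essentially bookkeeping: verifying the inner-exponent arithmetic and ensuring the summation ranges of $k$ and $\ell$ transform correctly under the change of variables $j=k-\ell$ so that the invocation of $q$-Vandermonde is legitimate in the precise form needed. As a robust alternative, one could argue by induction on $n$, using the factorizations $(\zeta;q)_{n+1}=(1-\zeta q^n)(\zeta;q)_n$ and $(\zeta^{-1}q;q)_{n+1}=(1-\zeta^{-1}q^{n+1})(\zeta^{-1}q;q)_n$ on the left together with the $q$-Pascal relation on the right; this sidesteps $q$-Vandermonde at the cost of a more tedious inductive step.
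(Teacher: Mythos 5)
Your proof is correct. Note that the paper does not actually prove this lemma; it simply cites it as a known finite form of the Jacobi triple product (Andrews, \emph{The Theory of Partitions}, p.~49, Ex.~1), so there is no internal argument to compare against. Your route---expanding both Pochhammer symbols by the finite $q$-binomial theorem, reindexing by $j=k-\ell$, and collapsing the inner sum via $q$-Chu--Vandermonde to $\binom{2n}{n-j}_q=(q)_{2n}/[(q)_{n-j}(q)_{n+j}]$---is the standard textbook derivation, and the exponent bookkeeping you record, $\binom{k}{2}+\binom{\ell+1}{2}=\binom{j}{2}+\ell(j+\ell)$, checks out. One small caveat: the general form of $q$-Vandermonde you quote, with exponent $q^{i(n-r+i)}$ attached to $\binom{m}{r-i}_q\binom{n}{i}_q$, has $m$ and $n$ interchanged in the exponent (it should read $q^{i(m-r+i)}$ when $i$ indexes $\binom{n}{i}_q$); this is harmless here because you only invoke it with $m=n$, where the two versions coincide, but it would fail for $m\neq n$.
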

 Secondly, we require Heine's transformation \cite[p. 359 eq. (III.2)]{GR}.
\begin{lemma}\label{Heine}
Suppose $|q|,|\zeta|<1$, and $|c|<|b|$. Then we have
\begin{align*}
	\sum_{n\ge0}
	\frac{(a,b)_{n} \zeta^n}{(c,q)_{n}}
	&=
	 \frac{\left(\frac{c}{b},b\zeta\right)_{\infty}}{(c,\zeta)_{\infty}}
	\sum_{n\ge0}
		\frac{\left(\frac{ab\zeta}{c},b\right)_{n} \left(\frac{c}{b}\right)^n}
	{(b\zeta,q)_{n}}
.
\end{align*}
\end{lemma}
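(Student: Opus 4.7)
My approach is to derive the stated identity (Heine's second transformation, equation (III.2) in Gasper--Rahman) by iterating \emph{Heine's first transformation}, which is itself a direct consequence of the $q$-binomial theorem. I would carry this out in two main steps.

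\smallskip

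First, I recall the $q$-binomial theorem: for $|w|<1$ and $|q|<1$,
\[
\sum_{m\ge 0}\frac{(\alpha;q)_m}{(q;q)_m}w^m \;=\; \frac{(\alpha w;q)_\infty}{(w;q)_\infty}.
\]
Apply this with $\alpha = c/b$ and $w = bq^n$ to obtain
\[
\frac{(b;q)_n}{(c;q)_n} \;=\; \frac{(b;q)_\infty}{(c;q)_\infty}\cdot\frac{(cq^n;q)_\infty}{(bq^n;q)_\infty}
\;=\; \frac{(b;q)_\infty}{(c;q)_\infty}\sum_{m\ge 0}\frac{(c/b;q)_m\, b^m q^{mn}}{(q;q)_m}.
\]
Substituting into the left-hand side of the lemma, interchanging the sums in $n$ and $m$ (justified by absolute convergence under the hypotheses, with analytic continuation in $b$ where needed), and evaluating the inner sum in $n$ by the $q$-binomial theorem a second time as
\[
\sum_{n\ge 0}\frac{(a;q)_n(\zeta q^m)^n}{(q;q)_n} \;=\; \frac{(a\zeta;q)_\infty(\zeta;q)_m}{(\zeta;q)_\infty(a\zeta;q)_m},
\]
one obtains Heine's first transformation:
\[
\sum_{n\ge 0}\frac{(a,b)_n\,\zeta^n}{(c,q)_n} \;=\; \frac{(b;q)_\infty (a\zeta;q)_\infty}{(c;q)_\infty (\zeta;q)_\infty}\sum_{n\ge 0}\frac{(c/b,\zeta)_n\, b^n}{(a\zeta,q)_n}.
\]

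\smallskip

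Second, I apply Heine's first transformation once more to the series on the right. The key trick is to exploit the symmetry of the series in its two upper parameters: treat the inner series as having upper parameters $(\zeta, c/b)$ rather than $(c/b, \zeta)$, and apply Heine I with the parameter substitutions $a\mapsto \zeta$, $b\mapsto c/b$, $c\mapsto a\zeta$, $\zeta\mapsto b$. This yields
\[
\sum_{n\ge 0}\frac{(c/b,\zeta)_n\, b^n}{(a\zeta,q)_n} \;=\; \frac{(c/b;q)_\infty (b\zeta;q)_\infty}{(a\zeta;q)_\infty (b;q)_\infty}\sum_{n\ge 0}\frac{(ab\zeta/c,b)_n(c/b)^n}{(b\zeta,q)_n}.
\]
Substituting this back into the previous display, the accumulated prefactor telescopes (the factors $(b;q)_\infty$ and $(a\zeta;q)_\infty$ cancel between numerator and denominator) to
\[
\frac{(c/b;q)_\infty (b\zeta;q)_\infty}{(c;q)_\infty (\zeta;q)_\infty},
\]
and the asserted identity follows.

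\smallskip

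The main obstacle is purely bookkeeping: one must carefully match the parameter substitutions in the second application of Heine I so that the two prefactors combine to produce the nontrivial factor $(c/b;q)_\infty (b\zeta;q)_\infty / ((c;q)_\infty(\zeta;q)_\infty)$ rather than collapsing back to the original series (which is what happens for the ``naive'' labeling $(c/b,\zeta)$ of the upper parameters, since Heine I is an involution on that ordering). The analytic interchanges of summation present no substantive difficulty under the convergence hypotheses $|q|, |\zeta|<1$ and $|c|<|b|$.
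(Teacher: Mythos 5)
Your derivation is correct: iterating Heine's first transformation (itself a consequence of the $q$-binomial theorem), with the upper parameters of the intermediate series swapped before the second application, is exactly the standard proof of (III.2), and your parameter bookkeeping and the telescoping of the prefactors check out. The paper offers no proof of this lemma, citing it directly from Gasper--Rahman, and your argument is precisely the one given in that reference, so there is nothing to compare beyond noting that the convergence/analytic-continuation caveat you mention (one needs $|b|<1$ for the intermediate steps) is handled the same way there.
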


\section{Representation in terms of indefinite theta functions}
In this section we prove a fundamental identity for $\bar{P}_\omega$ that provides a representation in terms of indefinite theta functions. In fact, we find a new representation for a $1$-parameter generalization of $\bar{P}_\omega$. Define
\[
\bar P_\omega(\zeta;q):=\frac{(q)_\infty}{\left(\zeta,\zeta^{-1}q\right)_\infty\left(-q;q^2\right)_\infty}\sum_{n\ge1} \frac{\left(\zeta,\zeta^{-1}q\right)_n \left(-q;q^2\right)_n }{(q)_{2n}} q^n;
\]
this also appeared in \cite{JS}.
It is not hard to see, comparing termwise to \eqref{pbargen}, that
$$
\bar P_\omega(q)=\bar P_\omega(1;q).
$$


We prove a double series representation for $\bar{P}_\omega(\zeta;q)$ that ultimately helps us identify the modularity properties of $\bar{P}_\omega(q)$.
\begin{theorem}\label{Pwz}
We have
\[
\bar{P}_\omega(\zeta;q)=\frac{1}{\left(\zeta,\zeta^{-1}q,q\right)_\infty} \sum_{j\ge1} \sum_{n\ge0} \frac{(-1)^{j+1}i^n\left(1-\zeta^j\right)\left(1-\left(\zeta^{-1}q\right)^j\right)q^{\frac{j(j+1)}{2}+n\left(j+\frac12\right)}}{1+iq^{j+n+\frac12}}.
\]
\end{theorem}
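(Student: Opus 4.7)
The strategy combines Lemma~\ref{FJTP} (the finite Jacobi triple product) with Lemma~\ref{Heine} (Heine's transformation). The key algebraic identity I will use repeatedly is $(-q^{2k+1};q^2)_m=(iq^{k+1/2},-iq^{k+1/2};q)_m$, which follows from the factorization $1+q^{2\ell+1}=(1-iq^{\ell+1/2})(1+iq^{\ell+1/2})$ and rewrites the $(-q;q^2)$-Pochhammer in a form amenable to Heine.

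The plan starts by applying Lemma~\ref{FJTP} to expand $(\zeta,\zeta^{-1}q)_n/(q)_{2n}$ in the defining sum, interchanging the order of summation, and using the symmetry $(q)_{n-j}(q)_{n+j}=(q)_{n+|j|}(q)_{n-|j|}$ to pair the $j=\pm k$ contributions for $k\ge 1$. Shifting $n=m+k$ and factoring $(-q;q^2)_{m+k}=(-q;q^2)_k(-q^{2k+1};q^2)_m$ and $(q)_{m+2k}=(q)_{2k}(q^{2k+1};q)_m$, the defining sum reduces to $T_0-1+\sum_{k\ge 1}(-1)^k q^{k(k+1)/2}(\zeta^k+\zeta^{-k}q^k)T_k$, where $T_k:=\sum_{m\ge 0}\frac{(-q;q^2)_{m+k}q^m}{(q)_m(q)_{m+2k}}$. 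Applying Lemma~\ref{Heine} to $T_k$ with $a=iq^{k+1/2}$, $b=-iq^{k+1/2}$, $c=q^{2k+1}$, $\zeta=q$, the transformed inner series telescopes through $(-iq^{k+1/2};q)_m/(-iq^{k+3/2};q)_m=(1+iq^{k+1/2})/(1+iq^{k+m+1/2})$, while the prefactor Pochhammers collapse via the identities $(1+iq^{k+1/2})(-iq^{k+3/2};q)_\infty=(-iq^{k+1/2};q)_\infty$, $(iq^{k+1/2},-iq^{k+1/2};q)_\infty=(-q;q^2)_\infty/(-q;q^2)_k$, and $(q^{2k+1};q)_\infty=(q)_\infty/(q)_{2k}$. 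All $k$-dependent factors cancel and one obtains uniformly for $k\ge 0$ the closed form
\[
T_k=\frac{(-q;q^2)_\infty}{(q)_\infty^2}\sum_{m\ge 0}\frac{i^m q^{m(k+1/2)}}{1+iq^{k+m+1/2}}.
\]

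Substituting this back and multiplying by $\frac{(q)_\infty}{(\zeta,\zeta^{-1}q)_\infty(-q;q^2)_\infty}$, and expanding $(1-\zeta^j)(1-\zeta^{-j}q^j)=1-\zeta^j-\zeta^{-j}q^j+q^j$ on the target side, the $\zeta$-bearing pieces $-\zeta^k-\zeta^{-k}q^k$ match the $\zeta^k+\zeta^{-k}q^k$ coming from the FJTP step (the sign absorbed by $(-1)^{j+1}$ versus $(-1)^j$). What remains is a $\zeta$-independent identity, and establishing it will be the main obstacle: one must verify
\[
\sum_{m\ge 0}\frac{i^m q^{m/2}}{1+iq^{m+1/2}}+\sum_{k\ge 1}(-1)^k(1+q^k)\sum_{m\ge 0}\frac{i^m q^{k(k+1)/2+m(k+1/2)}}{1+iq^{k+m+1/2}}=\frac{(q)_\infty^2}{(-q;q^2)_\infty},
\]
which reconciles the $-1$ produced by the $j=0,\,n\ge 1$ boundary with the constant term $\frac{(q)_\infty}{(\zeta,\zeta^{-1}q)_\infty(-q;q^2)_\infty}$. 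I expect this Hecke--Rogers type equality can be proven by expanding $1/(1+iq^{k+m+1/2})$ as a geometric series and recognising the resulting triple sum via Jacobi's triple product, exploiting the vanishing of $\sum_{k\in\Z}(-1)^k q^{k(k+c)/2}$ for odd $c\in\N$.
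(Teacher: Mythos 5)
Your derivation coincides with the paper's up to and including the Heine step: the same application of Lemma \ref{FJTP}, the same pairing of the $j=\pm k$ contributions via the symmetry of $(\zeta,\zeta^{-1}q)_\infty$ under $\zeta\mapsto \zeta^{-1}q$, and the same parameter choice $a=iq^{k+\frac12}$, $b=-iq^{k+\frac12}$, $c=q^{2k+1}$, $\zeta=q$ in Lemma \ref{Heine}; your closed form for $T_k$ checks out. The genuine gap is the final $\zeta$-independent identity, which you correctly isolate as ``the main obstacle'' but then only sketch a plan for: expanding $(1+iq^{k+m+\frac12})^{-1}$ as a geometric series and hoping to recognize the resulting triple sum via the Jacobi triple product is not a proof, and it is far from clear that this route closes without substantial extra work, since the triple sum does not visibly factor into theta products. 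As written, the argument is incomplete.

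The identity you need is true, and there is a one-line way to obtain it that you missed: it is exactly the statement that your transformed expansion vanishes at $\zeta=1$. Every term of the series $\sum_{n\ge1}\frac{(\zeta,\zeta^{-1}q)_n(-q;q^2)_n}{(q)_{2n}}q^n$ contains the factor $(\zeta;q)_n$ with $n\ge1$, which vanishes at $\zeta=1$, so your reduction forces
\[
T_0-1+\sum_{k\ge1}(-1)^kq^{\frac{k(k+1)}{2}}\left(1+q^k\right)T_k=0,
\]
and inserting your Heine evaluation of $T_k$ into this relation, then multiplying by $(q)_\infty^2/(-q;q^2)_\infty$, yields precisely the displayed identity you left unproven. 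This is exactly how the paper closes the argument: it uses $P^*_\omega(1;q)=0$ to rewrite the $j=0$ coefficient plus the constant correction as minus the $(1+q^j)$-weighted sum over $j\ge1$, after which the factorization $\left(1-\zeta^j\right)\left(1-\left(\zeta^{-1}q\right)^j\right)=1+q^j-\zeta^j-\zeta^{-j}q^j$ assembles the theorem. With this observation inserted, your proof becomes complete and is essentially identical to the paper's.
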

\begin{proof}
To prove this identity, we isolate the coefficient of $\zeta^j$ in
\begin{align*}
P^*_\omega(\zeta;q)
	&:=
	\left(\zeta,\zeta^{-1}q\right)_{\infty}\bar P_\omega(\zeta;q)
	,	
\end{align*}
transform this coefficient with standard $q$-series techniques, and
then sum over $j$. For convenience we use the notation that $[\zeta^j]F(\zeta)$ is the
coefficient of $\zeta^j$ in a series $F(\zeta)$. Noting that
$P^*_\omega(\zeta;q)$ is symmetric in $\zeta$ and $\zeta^{-1}q$, we have
$[\zeta^{-j}]P^*_\omega(\zeta;q)=q^{j}[\zeta^{j}]P^*_\omega(\zeta;q)$, and so we only need to determine the coefficients of the non-negative powers of $\zeta$.

By Lemma \ref{FJTP}, we have that
\begin{align*}
	P^*_\omega(\zeta;q)
	&=
	\frac{(q)_{\infty}}{\aqprod{-q}{q^2}{\infty}}
	\sum_{n\ge1}
	\aqprod{-q}{q^2}{n} q^n
	\sum_{j=-n}^n
	\frac{(-1)^j \zeta^j q^{\frac{j(j-1)}{2}} }
	{(q)_{n-j}(q)_{n+j}}
.
\end{align*}
From the above we see that the calculation of the coefficients of $\zeta$ slightly differs depending on whether
$j\ge 1$ or $j=0$.

For $j\ge 1$, we have that
\begin{align}
\notag	\left[\zeta^j\right]P^*_\omega(\zeta;q)
	&=
		\frac{(-1)^j q^{\frac{j(j-1)}{2}} (q)_{\infty}}{\aqprod{-q}{q^2}{\infty}}
		\sum_{n=j}^\infty
		\frac{ \aqprod{-q}{q^2}{n} q^n}
		{(q)_{n-j}(q)_{n+j}}
	=
		\frac{(-1)^j q^{\frac{j(j-1)}{2}} (q)_{\infty}}{\aqprod{-q}{q^2}{\infty}}
		\sum_{n\ge0}
		\frac{ \aqprod{-q}{q^2}{n+j} q^{n+j}}
		{(q)_{n}(q)_{n+2j}}\\
\notag 	&=
		\frac{(-1)^j q^{\frac{j(j+1)}{2}} (q)_{\infty} \aqprod{-q}{q^2}{j}}
		{\aqprod{-q}{q^2}{\infty}(q)_{2j}}
		\sum_{n\ge0}
		\frac{ \left(iq^{j+\frac{1}{2}}, -iq^{j+\frac{1}{2}}\right)_{n} q^{n}}
		{\left(q^{2j+1},q\right)_{n}}
	\\
\notag	&=
		\frac{(-1)^j q^{\frac{j(j+1)}{2}} (q)_{\infty} \aqprod{-q}{q^2}{j}
			 \left(iq^{j+\frac{1}{2}},-iq^{j+\frac{3}{2}}\right)_{\infty} }
		{\aqprod{-q}{q^2}{\infty} (q)_{2j} \left(q^{2j+1},q\right)_{\infty}}
		\sum_{n\ge0}
		\frac{ \left(1+iq^{j+\frac{1}{2}}\right) i^nq^{n\left(j+\frac{1}{2}\right)} }
		{1+iq^{j+\frac{1}{2}+n}}
	\\
\label{E:Heinej}
	&=
		\frac{(-1)^j q^{\frac{j(j+1)}{2}} }{ (q)_{\infty}}
		\sum_{n\ge0}
		\frac{ i^nq^{n\left(j+\frac{1}{2}\right)} }
		{1+iq^{j+\frac{1}{2}+n}}
,
\end{align}
where in the penultimate equality we apply Lemma \ref{Heine}
with $\zeta=q$, $a=iq^{j+\frac{1}{2}}$, $b=-iq^{j+\frac{1}{2}}$, and $c=q^{2j+1}$.

For $j=0$, we instead have that
\begin{align*}
	\left[\zeta^0\right]P^*_\omega(\zeta;q)
	&=
		 \frac{(q)_{\infty}}{\aqprod{-q}{q^2}{\infty}}
		\sum_{n\ge1}
		\frac{ \aqprod{-q}{q^2}{n} q^n}
		{(q)^2_{n}}
	=
		 \frac{(q)_{\infty}}{\aqprod{-q}{q^2}{\infty}}
		\sum_{n\ge0}
		\frac{ \aqprod{-q}{q^2}{n} q^n}
		{(q)^2_{n}}
		-
		 \frac{(q)_{\infty}}{\aqprod{-q}{q^2}{\infty}}
	\\
	&=
		\frac{1}{(q)_{\infty}}
	 \sum_{n\ge0}
		\frac{ i^nq^{\frac{n}{2}} }
		{1+iq^{\frac{1}{2}+n}}
		-
		 \frac{(q)_{\infty}}{\aqprod{-q}{q^2}{\infty}}
.	
\end{align*}
Here the final equality follows by observing that the sum is exactly the $j=0$ case from \eqref{E:Heinej}.

Summing over $j$ then gives that
\begin{align*}
	& P^*_\omega(\zeta;q)\\
	&=
		\frac{1}{(q)_{\infty}}
		\sum_{n\ge0}
		\frac{ i^nq^{\frac{n}{2}} }
		{1+iq^{\frac{1}{2}+n}}
		-
		 \frac{(q)_{\infty}}{\aqprod{-q}{q^2}{\infty}}
	+
		\frac{1}{(q)_{\infty}}
		\sum_{j\ge1}
		\left(\zeta^j+q^j\zeta^{-j}\right)	(-1)^j q^{\frac{j(j+1)}{2}}
		\sum_{n\ge0}
		\frac{ i^nq^{n\left(j+\frac{1}{2}\right)} }
		{1+iq^{j+\frac{1}{2}+n}}
	.
\end{align*}
However, we note that $P^*_\omega(1;q)=0$ and so
\begin{align*}
		\frac{1}{(q)_{\infty}}
		\sum_{n\ge0}
		\frac{ i^nq^{\frac{n}{2}} }
		{1+iq^{\frac{1}{2}+n}}
		-
		 \frac{(q)_{\infty}}{\aqprod{-q}{q^2}{\infty}}
		&=
		-
		\frac{1}{(q)_{\infty}}
		\sum_{j\ge1}
		\left(1+q^j\right)	(-1)^j q^{\frac{j(j+1)}{2}}
		\sum_{n\ge0}
		\frac{ i^nq^{n\left(j+\frac{1}{2}\right)} }
		{1+iq^{j+\frac{1}{2}+n}}
	.
\end{align*}
Thus
\begin{align*}
P^*_\omega(\zeta;q)
	&=
		\frac{1}{(q)_{\infty}}
		\sum_{j\ge1}
		\left(\zeta^j+q^j\zeta^{-j}-1-q^j\right) (-1)^j q^{\frac{j(j+1)}{2}}
		\sum_{n\ge0}
		\frac{ i^nq^{n\left(j+\frac{1}{2}\right)} }
		{1+iq^{j+\frac{1}{2}+n}}.
	\\
\end{align*}
Factoring gives the claim.
\end{proof}

Theorem \ref{Pwz} immediately leads to the following
indefinite theta series representation of $\bar{P}_\omega(q)$.

\begin{corollary}\label{Pwrep}
We have
\[
\bar P_\omega(q)=-\frac{1}{(q)^3_\infty}\left(\sum_{n,j,\ell\ge0}+\sum_{n,j,\ell<0} \right) j\left(1-q^j\right)(-1)^{j+n+\ell}q^{\frac{j(j+1)}{2}+2nj+2\ell j+4n\ell+n+\ell}.
\]
\end{corollary}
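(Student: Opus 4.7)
The plan is to derive Corollary \ref{Pwrep} directly from Theorem \ref{Pwz} by specializing at $\zeta=1$ and then expanding the inner denominator as a geometric series. Although $1/(\zeta;q)_\infty$ is singular at $\zeta=1$, each summand carries a compensating factor $(1-\zeta^j)$, and the limit is computed via $\lim_{\zeta\to 1}(1-\zeta^j)/(1-\zeta)=j$ together with $(\zeta^{-1}q;q)_\infty\to(q;q)_\infty$. This produces
\[
\bar P_\omega(q)=\frac{1}{(q)_\infty^3}\sum_{j\ge 1}\sum_{n\ge 0}\frac{(-1)^{j+1}\,i^n\,j(1-q^j)\,q^{\frac{j(j+1)}{2}+n(j+\frac{1}{2})}}{1+iq^{j+n+\frac{1}{2}}}.
\]

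Next I would expand $(1+iq^{j+n+\frac{1}{2}})^{-1}=\sum_{\ell\ge 0}(-i)^\ell q^{\ell(j+n+\frac{1}{2})}$ (valid as a formal $q$-series since $j+n+\tfrac{1}{2}\ge\tfrac{3}{2}$), obtaining a triple sum with exponent $\frac{j(j+1)}{2}+(n+\ell)(j+\frac{1}{2})+n\ell$ (symmetric in $n,\ell$) and coefficient $(-1)^{j+1}i^n(-i)^\ell j(1-q^j)$. Since
\[
i^n(-i)^\ell+i^\ell(-i)^n=i^{n+\ell}\bigl((-1)^n+(-1)^\ell\bigr)
\]
vanishes when $n$ and $\ell$ have opposite parities, symmetrizing in $(n,\ell)$ eliminates all mixed-parity contributions. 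Only the same-parity cases $(n,\ell)=(2a,2b)$ and $(n,\ell)=(2a+1,2b+1)$ survive, and both collapse to the real coefficient $(-1)^{a+b}$.

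The even-parity case with $a,b\ge 0$ has exponent $\tfrac{j(j+1)}{2}+2aj+2bj+4ab+a+b$, which after the relabelling $(a,b)\mapsto(n,\ell)$ is exactly the $n,j,\ell\ge 0$ octant in Corollary \ref{Pwrep} (the $j=0$ terms vanish thanks to the $j(1-q^j)$ factor). The odd-parity case has exponent $\tfrac{j(j+1)}{2}+2j(a+b+1)+4ab+3a+3b+2$; I would match this with the $n,j,\ell<0$ octant via the reflection $(j,a,b)\mapsto(-J,-N-1,-L-1)$ with $J,N,L\le-1$, combined with the elementary identity $-J(1-q^{-J})=Jq^{-J}(1-q^J)$. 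The factor $q^{-J}$ shifts the exponent by $-J$, producing exactly the indefinite quadratic $\tfrac{J(J+1)}{2}+2NJ+2LJ+4NL+N+L$ on the negative cone.

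The main obstacle is the careful bookkeeping in the odd-parity case: one must reconcile the signs from $(-1)^{j+n+\ell}$ under the reflection, handle the asymmetric factor $j(1-q^j)$ correctly via the identity above, and verify that the resulting exponent agrees term-for-term with the negative-cone contribution. Conceptually, the proof shows how the two opposite cones in the indefinite theta series representation arise automatically from the two parity classes of the expansion coefficients $i^n(-i)^\ell$.
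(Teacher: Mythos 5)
Your proposal is correct and follows essentially the same route as the paper: specialize Theorem \ref{Pwz} at $\zeta=1$ via the limit $\lim_{\zeta\to1}(1-\zeta^j)/(1-\zeta)=j$, expand the denominator geometrically in $\ell$, kill the mixed-parity terms by symmetrizing in $(n,\ell)$, and map the odd-parity terms onto the negative cone by a reflection (the paper first shifts to $n,\ell\ge1$ and then sends $(j,n,\ell)\mapsto(-j,-n,-\ell)$, which is the same substitution you combine into one step). The sign and exponent bookkeeping in your odd-parity case checks out, so no gap remains.
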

\begin{proof}
Taking $\zeta\to1$ in Theorem \ref{Pwz} and using that
\[
\lim_{\zeta\to1} \frac{\zeta^j-1}{1-\zeta}=-j
\]
gives that
\[
\bar P_\omega(q)=-\frac{1}{(q)^3_\infty}\sum_{j\ge1} \sum_{n\ge0} \frac{j\left(1-q^j\right)(-1)^j i^n q^{\frac{j(j+1)}{2}+n\left(j+\frac12\right)}}{1+iq^{j+n+\frac12}}.
\]
Note that throughout the following calculations we frequently also include the $j=0$ term if it gives a more convenient representation.

Expanding the geometric series then gives
\[
-(q)^3_\infty \bar P_\omega(q)=\sum_{j\ge1\atop{n,\ell\ge0}} j(-1)^j i^{n-\ell} \left(1-q^j\right)q^{\frac{j(j+1)}{2}+n\left(j+\frac12\right)+\ell\left(j+n+\frac12\right)}.
\]
Changing $(n,\ell)\mapsto(\ell,n)$, one sees that the contribution from $n\equiv \ell+1\pmod{2}$ vanishes. Splitting the sum according to the parity on $n$ and $\ell$ and then changing $(j,n,\ell)\mapsto (-j,-n,-\ell)$ in the second sum gives
\begin{align*}
-(q)^3_\infty \bar P_\omega(q)=&\sum_{j,n,\ell\ge0} j (-1)^{j+n+\ell} \left(1-q^j\right) q^{\frac{j(j+1)}{2}+2nj+2\ell j+4n\ell+n+\ell}\\
&+\sum_{j,n,\ell\ge1} j(-1)^{j+n+\ell} \left(1-q^j\right) q^{\frac{j(j-3)}{2}+2nj+2\ell j-n-\ell+4n\ell}\\
=&\left(\sum_{j,n,\ell\ge0} +\sum_{j,n,\ell<0}\right) j(-1)^{j+n+\ell} \left(1-q^j\right) q^{\frac{j(j+1)}{2}+2nj+2\ell j+n+\ell+4n\ell}.
\end{align*}
This yields the claim.
\end{proof}

\section{Proof of Theorem \ref{mock}}

Let
\[
\widehat{P}_\omega(\tau):=
\frac{i}{4\pi^2\eta(\tau)^6}\mathcal{F}'(0;\tau)^2
+\frac{e^{-\frac{\pi i}{4}}}{\pi^2}\frac{\eta(4\tau)}{4\eta(\tau)^3\eta(2\tau)^2}\mathcal{F}^{''}(0;\tau),
\]
where (recall $q=e^{2\pi i\tau}$ and $\zeta=e^{2\pi iz}$)
\begin{equation}\label{defineF}
\mathcal{F}(z; \tau):=q^{-\frac18}\zeta^{\frac12}\vartheta(z;\tau)\widehat{\mu}\left(z, \frac{\tau}{2}+\frac14;\tau\right).
\end{equation}
In this section, we prove that this function is a (non-holomorphic) modular form for
\[
\Gamma:=\left\{\begin{pmatrix} a&b\\c&d\end{pmatrix} \in\SL_2(\Z):4|c, \frac{c}{4}\equiv\frac{d-1}{2} \equiv b \pmod{2}\right\}.
\]
However, first we introduce a few auxiliary functions and determine their
modular properties. As we see below, these functions are related to
$\bar{P}_\omega(q)$ and $\widehat{P}_\omega(\tau)$.

Set
\begin{equation}\label{defineG}
G\left(z_1,z_2,z_3;\tau \right):=4 q^{-\frac18}\zeta_1^{-\frac12}\zeta_2^{\frac14}\zeta_3^{\frac14}
\left(\sum_{k>0\atop{\ell,n\ge0}}+\sum_{k\le0\atop{\ell,n<0}}\right)(-1)^k q^{\frac{k(k+1)}{2}+2k\ell+2kn+4\ell n}\zeta_1^k\zeta_2^\ell\zeta_3^n,
\end{equation}
where throughout this section we write $\zeta_j := e^{2 \pi i z_j}$ for the Jacobi parameters $z_j$.
It is not hard to see that (recall that we drop $\tau$-dependencies whenever they are clear from the context)
\begin{equation*}
G(z_1,z_2,z_3)=\sum_{0\le\alpha, \beta\le1}i^{-\alpha-\beta}F\left(z_1,\frac{z_2}{2}+\frac{\alpha}{2},\frac{z_3}{2}+\frac{\beta}{2}\right),
\end{equation*}
with
\[
F\left(z_1,z_2,z_3;\tau \right):=q^{-\frac18}\zeta_1^{-\frac12}\zeta_2^{\frac12}\zeta_3^{\frac12}
\left(\sum_{k>0\atop{\ell,n\ge0}}+\sum_{k\le0\atop{\ell,n<0}}\right)(-1)^k q^{\frac{k(k+1)}{2}+k\ell+kn+\ell n}\zeta_1^k\zeta_2^\ell\zeta_3^n.
\]

It is shown in Theorem 1.3 of \cite{BRZ} that
\begin{align}\label{EqBRZMockModular}
	F\left(z_1,z_2,z_3 \right)
	=
		 i\vartheta(z_1)\mu\left(z_1,z_2\right)\mu\left(z_1,z_3\right)
		-
		 \frac{\eta^3\vartheta(z_2+z_3)}{\vartheta(z_2)\vartheta(z_3)}
		\mu\left(z_1,z_2+z_3\right).
\end{align}
Below we plug in $z_1 = 0$, and it is important to note that $F$ has a
removable singularity at this value, even though the individual terms in
\eqref{EqBRZMockModular} may have poles. Indeed, this follows from the
evaluations
\begin{equation}\label{limit}
\vartheta^\prime(0) = -2\pi \eta^3
,\quad\quad
\lim_{z_1\rightarrow 0} \left(z_1\mu(z_1,z_2)\right) = \frac{-1}{2\pi i\vartheta(z_2)}.
\end{equation}

Recalling \eqref{muhat}, we define the modular completion of this function $F$ by
\[
\widehat{F}\left(z_1,z_2,z_3;\tau\right):=i\vartheta(z_1;\tau)\widehat{\mu}(z_1,z_2;\tau)\widehat{\mu}(z_1,z_3;\tau) -\frac{\eta(\tau)^3\vartheta(z_2+z_3;\tau)}{\vartheta(z_2;\tau)\vartheta(z_3;\tau)}\widehat{\mu}(z_1,z_2+z_3;\tau).
\]
With Lemmas \ref{THETtrans} and \ref{muhattranlem}, we find the following
Jacobi transformation laws hold.
\begin{lemma}
For $
\left(
\begin{smallmatrix}
a & b\\
c & d
\end{smallmatrix}
\right)
\in \SL_2(\Z)
$ and $n_j,m_j,\alpha,\beta\in\Z$, we have
\begin{align}
\label{E:hatFmod}
&\widehat{F}\left(\frac{z_1}{c\tau+d},\frac{z_2}{c\tau+d},\frac{z_3}{c\tau+d};\frac{a\tau+b}{c\tau+d}\right)
\\
	&\qquad\quad=
	\psi\left(\begin{matrix} a & b\\ c & d\end{matrix}\right)^{-3}(c\tau+d)^{\frac32}
	e^{\frac{\pi ic}{c\tau+d}\left(-z_1^2-z_2^2-z_3^2+2z_1z_2+2z_1z_3\right)}
	\widehat{F}\left(z_1,z_2,z_3;\tau\right),
	\notag\\
\notag
&\widehat{F}\left(z_1+n_1\tau+m_1,z_2+n_2\tau+m_2,z_3+n_3\tau+m_3\right)
\\
	&\qquad\quad=
	 (-1)^{n_1+m_1+n_2+m_2+n_3+m_3}\zeta_1^{n_1-n_2-n_3}\zeta_2^{n_2-n_1}\zeta_3^{n_3-n_1}q^{\frac{n_1^2}{2}+\frac{n_2^2}{2}+\frac{n_3^2}{2}-n_1n_2-n_1n_3}\widehat{F}(z_1,z_2,z_3),
	\notag\\
\label{Felliptic}
&\widehat{F}\left(z_1+n_1\tau+m_1,\frac{z_2}{2}+\frac{\alpha}{2}+n_2\tau+m_2,\frac{z_3}{2}+\frac{\beta}{2}+n_3\tau+m_3\right) \\
\notag
&\qquad\quad= (-1)^{n_1(1+\alpha+\beta)+n_2(1+\alpha)+n_3(1+\beta)+m_1+m_2+m_3} \zeta_1^{n_1-n_2-n_3} \zeta_2^{\frac{n_2}{2}-\frac{n_1}{2}} \zeta_3^{\frac{n_3}{2}-\frac{n_1}{2}} \\
\notag
&\qquad\qquad\times q^{\frac{n_1^2}{2} + \frac{n_2^2}{2}+\frac{n_3^2}{2} - n_1n_2-n_1n_3} \widehat{F}\left(z_1,\frac{z_2}{2}+\frac{\alpha}{2},\frac{z_3}{2}+\frac{\beta}{2}\right).
\end{align}
\end{lemma}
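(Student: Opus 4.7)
The strategy is to apply Lemmas \ref{THETtrans} and \ref{muhattranlem} factor by factor to each of the two summands in the definition of $\widehat{F}$, and to confirm that both summands acquire the same automorphy factor under each transformation. Once this matching is verified the factor may be pulled out of $\widehat{F}$, yielding the stated identities.

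For the modular transformation \eqref{E:hatFmod} I would first handle the summand $i\vartheta(z_1)\widehat{\mu}(z_1,z_2)\widehat{\mu}(z_1,z_3)$: the factor $\vartheta(z_1)$ contributes the multiplier $\psi^3(c\tau+d)^{\frac12}e^{\pi icz_1^2/(c\tau+d)}$, while each $\widehat{\mu}(z_1,z_j)$ contributes $\psi^{-3}(c\tau+d)^{\frac12}e^{-\pi ic(z_1-z_j)^2/(c\tau+d)}$. The product yields the claimed weight factor $\psi^{-3}(c\tau+d)^{\frac32}$ together with an exponential whose argument $z_1^2-(z_1-z_2)^2-(z_1-z_3)^2$ expands to $-z_1^2-z_2^2-z_3^2+2z_1z_2+2z_1z_3$. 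For the second summand the multipliers of $\eta^3$, $\vartheta(z_2+z_3)$, $\vartheta(z_2)^{-1}\vartheta(z_3)^{-1}$, and $\widehat{\mu}(z_1,z_2+z_3)$ combine to the same $\psi^{-3}(c\tau+d)^{\frac32}$; the algebraic identity $(z_2+z_3)^2-z_2^2-z_3^2=2z_2z_3$, combined with the expansion of $-(z_1-z_2-z_3)^2$, reproduces the same quadratic form.

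For the elliptic transformation (the second identity), shifting $z_j\mapsto z_j+n_j\tau+m_j$ in every argument, I would tabulate the signs, $\zeta$-powers, and $q$-powers produced by $\vartheta(z+\lambda\tau+\mu)=(-1)^{\lambda+\mu}q^{-\lambda^2/2}\zeta^{-\lambda}\vartheta(z)$ and the corresponding shift law for $\widehat{\mu}$. In the first summand the identity $-\tfrac{n_1^2}{2}+\tfrac{(n_1-n_2)^2}{2}+\tfrac{(n_1-n_3)^2}{2}=\tfrac{n_1^2+n_2^2+n_3^2}{2}-n_1n_2-n_1n_3$ yields the claimed $q$-power, and the $\zeta$-contributions collapse to $\zeta_1^{n_1-n_2-n_3}\zeta_2^{n_2-n_1}\zeta_3^{n_3-n_1}$. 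In the second summand, $\vartheta(z_2+z_3)$ shifts by $(n_2+n_3)\tau+(m_2+m_3)$, and the analogous cancellation produces the same monomial and the same overall sign $(-1)^{n_1+m_1+n_2+m_2+n_3+m_3}$.

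The third identity \eqref{Felliptic} then follows by specialization of the second: substituting $\tfrac{z_2+\alpha}{2}$ and $\tfrac{z_3+\beta}{2}$ into the second and third slots, the factors $\zeta_2^{n_2-n_1}$ and $\zeta_3^{n_3-n_1}$ become $\zeta_2^{(n_2-n_1)/2}(-1)^{\alpha(n_2-n_1)}$ and $\zeta_3^{(n_3-n_1)/2}(-1)^{\beta(n_3-n_1)}$; consolidating the extra signs with $(-1)^{n_1+m_1+n_2+m_2+n_3+m_3}$ gives the asserted exponent $(-1)^{n_1(1+\alpha+\beta)+n_2(1+\alpha)+n_3(1+\beta)+m_1+m_2+m_3}$. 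The main obstacle is simply the bookkeeping: the genuine content of the lemma lies in the non-trivial matching of automorphy factors between the two summands of $\widehat{F}$ for the modular transformation, which ultimately reduces to the cancellation $(z_2+z_3)^2-z_2^2-z_3^2=2z_2z_3$; all remaining verifications are mechanical.
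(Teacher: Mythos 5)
Your proposal is correct and follows exactly the route the paper intends: the paper states this lemma with no written proof beyond the remark that it follows from Lemmas \ref{THETtrans} and \ref{muhattranlem}, and your factor-by-factor bookkeeping (including the key cancellations $z_1^2-(z_1-z_2)^2-(z_1-z_3)^2 = (z_2+z_3)^2-z_2^2-z_3^2-(z_1-z_2-z_3)^2$ for the modular case and the matching of signs and $q$-, $\zeta$-powers between the two summands in the elliptic case) is precisely that omitted computation. Deriving \eqref{Felliptic} as a specialization of the second identity with $\zeta_j\mapsto \zeta_j^{1/2}(-1)^{\alpha}$ is also valid and reproduces the stated sign exponent.
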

The modular completion of $G$ is now naturally defined as
\begin{equation}
\label{E:Ghat=Fhat}
\widehat{G}\left(z_1,z_2,z_3;\tau\right):= \sum_{0\le \alpha,\beta \le 1} i^{-\alpha-\beta} \widehat{F}\left(z_1,\frac{z_2}{2}+\frac{\alpha}{2},\frac{z_3}{2}+\frac{\beta}{2};\tau \right).
\end{equation}
We see below that modular completions introduce additional terms that can
be either holomorphic or non-holomorphic, which need to be identified separately.
By applying \eqref{E:hatFmod} and then \eqref{Felliptic}, we find modular
transformations for $\widehat{G}$. We also use \eqref{Felliptic} to obtain
elliptic transformations for $\widehat{G}$. The proofs are nothing more than
lengthy, but straightforward calculations reducing exponents and roots of
unity, and as such are omitted.
\begin{lemma}For
$\left(\begin{smallmatrix} a&b\\c&d\end{smallmatrix}\right) \in\Gamma$, we have
\begin{multline}\label{Ghatt}
\widehat{G}\left(\frac{z_1}{c\tau+d}, \frac{z_2}{c\tau+d}, \frac{z_3}{c\tau+d}; \frac{a\tau+b}{c\tau+d}\right)\\
=\psi\left(\begin{matrix}a&b\\c&d\end{matrix}\right)^{-3}(c\tau+d)^{\frac32}
e^{\frac{\pi ic}{c\tau+d}\left(-z_1^2-\frac{z_2^2}{4}-\frac{z_3^2}{4}+z_1z_2+z_1z_3\right)}\widehat{G}\left(z_1, z_2, z_3; \tau\right).
\end{multline}
For $m_1,n_1\in\Z$, $m_2,n_2,m_3,n_3\in2\Z$, and $n_1\equiv\frac{n_2}{2}\equiv\frac{n_3}{2}\pmod{2}$,
we have
\begin{multline}\label{E:Ghatell}
\widehat{G}(z_1+n_1\tau+m_1,z_2+n_2\tau+m_2,z_3+n_3\tau+m_3)\\
=(-1)^{n_1+\frac{n_2}{2}+\frac{n_3}{2}+m_1+\frac{m_2}{2}+\frac{m_3}{2}}\zeta_1^{n_1-n_2-n_3}\zeta_2^{\frac{n_2}{4}-\frac{n_1}{2}}\zeta_3^{\frac{n_3}{4}-\frac{n_1}{2}} q^{\frac{n_1^2}{2}+\frac{n_2^2}{8}+\frac{n_3^2}{8}-\frac{n_1n_2}{2}-\frac{n_1n_3}{2}}
\widehat{G}(z_1,z_2,z_3).
\end{multline}
\end{lemma}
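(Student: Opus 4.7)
The plan is to derive both transformation laws directly from the definition \eqref{E:Ghat=Fhat} by applying the $\widehat{F}$-transformation laws \eqref{E:hatFmod} and \eqref{Felliptic} summand-by-summand, then checking that the $\alpha,\beta$-dependent phases collapse correctly so that $\sum_{\alpha,\beta} i^{-\alpha-\beta}\widehat{F}(\ldots)$ reassembles into $\widehat{G}$.

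For the modular transformation \eqref{Ghatt}, I would first rewrite each summand so that the second and third slots take the form $(z_j/2 + \alpha_j(c\tau+d)/2)/(c\tau+d)$, which allows a direct application of \eqref{E:hatFmod}. This yields the common prefactor $\psi^{-3}(c\tau+d)^{3/2}$ together with an exponential in the shifted arguments $w_j = z_j/2 + \alpha_j(c\tau+d)/2$. Expanding each $w_j^2$ separates the exponential into the target factor $\exp(\tfrac{\pi i c}{c\tau+d}(-z_1^2 - z_2^2/4 - z_3^2/4 + z_1 z_2 + z_1 z_3))$ and an $\alpha,\beta$-dependent remainder. The next move is to invoke \eqref{Felliptic} to push the internal arguments from $z_j/2 + \alpha_j(c\tau+d)/2$ back to $z_j/2 + \alpha_j/2$; the displacement $(\alpha_j c/2)\tau + \alpha_j(d-1)/2$ has the required integral form thanks to $4 \mid c$, and the resulting $\zeta$-powers such as $\zeta_j^{\alpha_j c/4}$ are genuine integer powers for the same reason. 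The congruence conditions $c/4 \equiv (d-1)/2 \equiv b \pmod 2$ defining $\Gamma$ are then precisely what is needed for the sign, $\zeta$- and $q$-factors produced by \eqref{Felliptic} to cancel the $\alpha,\beta$-dependent remainder left over from expanding the exponential; what survives is $\alpha,\beta$-independent, so summing against $i^{-\alpha-\beta}$ reassembles $\widehat{G}(z_1,z_2,z_3;\tau)$.

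For the elliptic transformation \eqref{E:Ghatell} the argument is more immediate: a shift $z_j \to z_j + n_j\tau + m_j$ in $\widehat{G}$ corresponds to a shift by $(n_j/2)\tau + m_j/2$ in the $j$-th slot of each $\widehat{F}$ for $j=2,3$, which is why the hypothesis $n_2, n_3, m_2, m_3 \in 2\Z$ is imposed. Substituting $N_1 = n_1,\, M_1 = m_1,\, N_j = n_j/2,\, M_j = m_j/2$ ($j=2,3$) into \eqref{Felliptic}, the $\alpha,\beta$-dependent portion of the resulting sign factor reduces to $(-1)^{(n_1 + n_2/2)\alpha + (n_1 + n_3/2)\beta}$, which is trivial exactly under the congruence $n_1 \equiv n_2/2 \equiv n_3/2 \pmod 2$. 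All remaining factors are independent of $\alpha$ and $\beta$, so the sum collapses to $\widehat{G}(z_1,z_2,z_3;\tau)$ times the claimed prefactor.

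The main obstacle is not conceptual but computational: one must track the interplay between the exponential correction from \eqref{E:hatFmod}, the cross and quadratic terms generated by expanding $w_j^2$ in the modular case, and the $\zeta_j, q$-factors from \eqref{Felliptic}. The congruence conditions defining $\Gamma$ are tailored precisely so that the phases from these three sources cancel modulo $2\Z$; verifying this cancellation is the one genuinely tedious step, but it requires no ideas beyond those already encoded in Lemmas \ref{THETtrans} and \ref{muhattranlem}.
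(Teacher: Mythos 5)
Your proposal matches the paper's (omitted) proof exactly: the authors likewise derive \eqref{Ghatt} by applying \eqref{E:hatFmod} and then \eqref{Felliptic} to each summand of \eqref{E:Ghat=Fhat}, and \eqref{E:Ghatell} by applying \eqref{Felliptic} with the halved shift parameters, describing the whole verification as a lengthy but straightforward reduction of exponents and roots of unity. The only point worth flagging is that your substitution $N_j=n_j/2$ for $j=2,3$ actually produces $\zeta_1^{n_1-\frac{n_2}{2}-\frac{n_3}{2}}$ rather than the $\zeta_1^{n_1-n_2-n_3}$ printed in \eqref{E:Ghatell} (the $\zeta_2$-, $\zeta_3$-, and $q$-exponents all confirm the $N_j=n_j/2$ normalization), so this looks like a typo in the stated lemma rather than a gap in your argument.
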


Additionally, we require the following shifted $G$-functions
\begin{align*}
H(z;\tau):=q^{-\frac14} \zeta G\left(z,\tau+\frac12,\tau+\frac12;\tau\right),
\qquad\widehat{H}(z;\tau):=q^{-\frac14} \zeta\widehat{G}\left(z,\tau+\frac12,\tau+\frac12;\tau\right).
\end{align*}
Using \eqref{Ghatt} and \eqref{E:Ghatell}, we determine the modular transformation
of $\widehat{H}$ in the following lemma. Again, we omit the proof.
\begin{lemma}\label{GhatLemma}
For $\left(\begin{smallmatrix}a&b\\c&d\end{smallmatrix}\right)\in\Gamma$,
\begin{equation}
\label{E:Hmod}
\widehat{H}\left(\frac{z}{c\tau+d};\frac{a\tau+b}{c\tau+d} \right)=\psi\begin{pmatrix}a&b\\c&d\end{pmatrix}^{-3}\exp\left(-\frac{\pi i}{2}\left(ab+\frac{cd}{4}\right)\right) (c\tau+d)^{\frac32}  \exp\left(-\frac{\pi icz^2}{c\tau+d}\right)\widehat{H}\left(z;\tau\right).
\end{equation}
\end{lemma}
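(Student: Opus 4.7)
The plan is to reduce the transformation of $\widehat{H}$ to the already-established modular and elliptic transformation laws \eqref{Ghatt} and \eqref{E:Ghatell} for $\widehat{G}$. First, I unfold the definition of $\widehat{H}$ at $(z/(c\tau+d), (a\tau+b)/(c\tau+d))$ and rewrite the shifted argument in the required modular form: $\tau' + \frac{1}{2} = w/(c\tau+d)$ with $w := (a+c/2)\tau + (b+d/2)$. This re-expresses the constant $\frac{1}{2}$-shift as the type of ``scaling by $c\tau+d$'' that \eqref{Ghatt} is designed to handle.

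Applying \eqref{Ghatt} with $z_1 = z$ and $z_2 = z_3 = w$ produces the factors $\psi^{-3}(c\tau+d)^{3/2}$, the quadratic exponential $\exp(\pi ic(-z^2 - w^2/2 + 2zw)/(c\tau+d))$, and $\widehat{G}(z, w, w; \tau)$. I then use \eqref{E:Ghatell} to shift $w$ back to $\tau + \frac{1}{2}$, writing $w = (\tau + \frac{1}{2}) + n\tau + m$ with $n = a + c/2 - 1$ and $m = b + (d-1)/2$. The congruences defining $\Gamma$ are exactly what is needed to justify this elliptic shift: $4 \mid c$ combined with $ad - bc = 1$ forces $a, d$ both odd, which together with $(d-1)/2 \equiv b \pmod{2}$ gives $n, m \in 2\Z$; then the finer condition $c/4 \equiv b \pmod{2}$ combined with $ad \equiv 1 \pmod{4}$ yields $n \equiv 0 \pmod{4}$, as required for $n_1 \equiv n_2/2 \equiv n_3/2 \pmod{2}$ with $n_1 = 0$. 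Finally I use the definition of $\widehat{H}$ a second time to replace $\widehat{G}(z, \tau+\frac{1}{2}, \tau+\frac{1}{2}; \tau)$ by $q^{1/4}\zeta^{-1}\widehat{H}(z; \tau)$.

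The last step is to collect all exponential factors and verify equality coefficient by coefficient in $z$. The $z^2$-term comes purely from \eqref{Ghatt} and gives the advertised $-\pi ic z^2/(c\tau+d)$. For the $z^1$-term, the key identity $cw = (a+c/2)(c\tau+d) - 1$ (which is $ad - bc = 1$ in disguise) causes the contributions from $\zeta'=e^{2\pi iz/(c\tau+d)}$, from the cross term $2\pi iczw/(c\tau+d)$, from the $\zeta$-powers in \eqref{E:Ghatell}, and from the $\zeta^{-1}$ in the definition of $\widehat{H}$ to cancel exactly. The $z^0$-constant simplifies through the expansion $(a+c/2)(b+d/2) = ab + bc + cd/4 + 1/2$ (again via $ad-bc=1$), combined with the $q^{n/2+n^2/4+1/4}$ and $(-1)^{n+m+n/2}$ factors from \eqref{E:Ghatell}, to yield the claimed phase $\exp(-\pi i(ab+cd/4)/2)$; the final reduction modulo $2\pi i$ hinges on $a \equiv d \pmod{4}$, which follows from $ad \equiv 1 \pmod{4}$.

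The main obstacle will be this last bookkeeping step, where several things must work in parallel: the $\zeta$-linear exponents must sum to zero, the $\tau$-linear pieces (from $q^{n^2/4}$ and the constant-in-$z$ part of the modular exponential) must cancel, the pieces with $c\tau+d$ in the denominator must combine to a constant, and the remaining rational combination must be an even integer after clearing denominators. The conceptual idea is simple---modular transform, then elliptic return---but the congruences defining $\Gamma$ are precisely what is needed for every one of these cancellations to close.
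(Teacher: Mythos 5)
Your plan is correct and is precisely the (omitted) argument the paper intends: transform via \eqref{Ghatt} with $z_2=z_3=w=(a+\frac c2)\tau+(b+\frac d2)$, return via the elliptic law with $n=a+\frac c2-1$, $m=b+\frac{d-1}{2}$ (the conditions defining $\Gamma$, together with $a\equiv d\pmod 4$ from $ad\equiv1\pmod4$, give $n\equiv0\pmod4$ and $m\in2\Z$ as you say), and close the bookkeeping with $cw=(a+\frac c2)(c\tau+d)-1$ and $(a+\frac c2)(b+\frac d2)=ab+bc+\frac{cd}{4}+\frac12$; I have checked that the $z$-linear, $\tau$-linear, and constant phases all cancel exactly as you claim. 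One warning for the execution: the factor $\zeta_1^{n_1-n_2-n_3}$ printed in \eqref{E:Ghatell} is inconsistent with \eqref{Felliptic} and \eqref{E:Ghat=Fhat}, which give $\zeta_1^{n_1-\frac{n_2}{2}-\frac{n_3}{2}}$; your $z$-linear cancellation needs the latter (the elliptic step must contribute $\zeta^{-n}$, not $\zeta^{-2n}$), so rederive the elliptic law for $\widehat{G}$ from \eqref{Felliptic} rather than quoting \eqref{E:Ghatell} verbatim.
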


Before finally returning to $\widehat{P}_\omega$,
we additionally define the functions
\begin{align*}
f_1(\tau)&:= v^{\frac32}\eta(-4\overline{\tau})^3,
&f_2(\tau)&:= \frac{\mathcal{F}'(0;\tau)}{\eta(\tau)^3},
&f_3(\tau)&:= \frac{\eta(4\tau)}{\eta(2\tau)^2},
&f_4(\tau)&:= v^{\frac12}\frac{\eta(-2\overline{\tau})^5}{\eta(-\overline{\tau})^2\eta(-4\overline{\tau})^2}
\end{align*}
and the multipliers
\begin{align*}
\chi_1\begin{pmatrix}a&b\\c&d\end{pmatrix}
&:= \psi\begin{psmallmatrix}a&4b\\ \frac{c}{4}&d\end{psmallmatrix}
,&
\chi_2\begin{pmatrix}a&b\\c&d\end{pmatrix}
&:= \begin{cases}i^{\frac{c}{8}}(-1)^{\frac{d-1}{4}} & \mbox{ if } 8|c,\\ie^{-\frac{\pi ic}{16}} & \mbox{ if } 4||c, \end{cases}
\\
\chi_3\begin{pmatrix}a&b\\c&d\end{pmatrix}
&:=
\frac{\psi\begin{psmallmatrix}a&4b\\ \frac{c}{4}&d\end{psmallmatrix}}{\psi\begin{psmallmatrix}a&2b\\ \frac{c}{2}&d\end{psmallmatrix}^2}
,&
\chi_4\begin{pmatrix}a&b\\c&d\end{pmatrix}
&:=
\frac{\psi\begin{psmallmatrix}a&2b\\ \frac{c}{2}&d\end{psmallmatrix}^5}{\psi\begin{psmallmatrix}a&b\\ c&d\end{psmallmatrix}^2
\psi\begin{psmallmatrix}a&4b\\ \frac{c}{4}&d\end{psmallmatrix}^2}
.
\end{align*}
We note that $v^{-\frac32}f_1(-\overline{\tau})$ is a weight $\frac32$ cusp form,
$f_3(\tau)$ is a weight $-\frac12$ weakly holomorphic modular form,
and $v^{-\frac12}f_4(-\overline{\tau})$ is a weight $\frac12$ weakly holomorphic modular form,
on $\Gamma_0(4)$ with multipliers
$\chi_1$, $\chi_3$, and $\chi_4$, respectively.

\begin{theorem}\label{T:Phat}
The function $\widehat{P}_\omega$ has the following properties.\\
\noindent\textnormal{i)} We have, for $\gamma=\left(\begin{smallmatrix} a&b\\c&d\end{smallmatrix}\right) \in\Gamma $,
	\[
	 \widehat{P}_\omega\left(\frac{a\tau+b}{c\tau+d}\right)=e^{\frac{\pi ic}{8}}(c\tau+d)\widehat{P}_\omega(\tau).
	\]\\
\noindent\textnormal{ii)} The holomorphic part of $\widehat P_\omega$ (by which we mean the part that can be expressed as a $q$-series  $\sum_{n \geq 0} a(n) q^n$) is
	$$
	\bar{P}_\omega(q) +\frac14-\frac12\frac{\eta(4\tau)}{\eta(2\tau)^2}.
	$$
	
\noindent\textnormal{iii)} The Maass lowering operator acts as

	\[
	L\left(\widehat{P}_\omega\right)=\frac{e^{\frac{3\pi i}{4}}\sqrt{2}}{\pi} f_1f_2-\frac{e^{\frac{\pi i}{4}}}{2\sqrt{2}\pi}f_3f_4.
	\]
	Furthermore, the function $f_2$ is a harmonic Maass form of weight $\frac12$ with multiplier $\chi_2$ on $\Gamma$ and
	shadow $2\sqrt{2}e^{-\frac{\pi i}{4}}\pi\eta(4\tau)^3$.


\end{theorem}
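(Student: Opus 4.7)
The plan is to analyze $\widehat{P}_\omega$ by viewing it as built from the second-order Taylor coefficients of the completed Jacobi form $\widehat{F}$ at $z_1 = 0$, specialized at $z_2 = z_3 = \tau + \tfrac12$, and then invoking the machinery developed in the preceding sections. The definition of $\widehat{P}_\omega$ as $\mathcal{F}'(0)^2/\eta^6$ and $\mathcal{F}''(0) \cdot \eta(4\tau)/(\eta^3 \eta(2\tau)^2)$ already suggests such a Taylor expansion, since $\mathcal{F}(z;\tau) = q^{-1/8}\zeta^{1/2}\vartheta(z)\widehat{\mu}(z, \tfrac{\tau}{2}+\tfrac14;\tau)$ is exactly one of the building blocks of $\widehat{F}$ and $\widehat{G}$.

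For part i), I would start from the index-$0$, weight-$0$ Jacobi transformation of $\mathcal{F}$ that comes from combining Lemma \ref{THETtrans} and Lemma \ref{muhattranlem} (the $\psi^3$ and $\psi^{-3}$ multipliers of $\vartheta$ and $\widehat{\mu}$ cancel, and the exponential weight factors add). The key step is to use the elliptic transformation of $\widehat{\mu}$ in its second slot to reduce $\widehat{\mu}(z, \tfrac{a\tau+b}{2(c\tau+d)}+\tfrac14; \tfrac{a\tau+b}{c\tau+d})$ back to $\widehat{\mu}(z, \tfrac{\tau}{2}+\tfrac14; \tau)$; here the congruences $4\mid c$ and $c/4 \equiv (d-1)/2 \equiv b \pmod 2$ defining $\Gamma$ are exactly what is needed for the shifts to be integer/lattice-compatible. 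Differentiating at $z=0$ boosts the weight of $\mathcal{F}^{(k)}(0)$ to $k$, and combining with $\eta^{-6}$ or $\eta(4\tau)/(\eta^3 \eta(2\tau)^2)$ (weights $-3$ and $-1$, respectively) and tracking the $\psi$-multipliers together with the elliptic root-of-unity factors produces the claimed $e^{\pi i c/8}(c\tau+d)$ automorphy factor.

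For part ii), the strategy is to identify the holomorphic part of $\widehat{P}_\omega$ with the indefinite theta series from Corollary \ref{Pwrep}. Using the identity \eqref{EqBRZMockModular} for $F$ together with the limits \eqref{limit}, I would expand $F(z_1, \tfrac{z_2+\alpha}{2}, \tfrac{z_3+\beta}{2})$ as a power series in $z_1$ at $z_1=0$, specialize to $z_2 = z_3 = \tau+\tfrac12$, and sum over $\alpha,\beta \in\{0,1\}$ with weights $i^{-\alpha-\beta}$, so as to build $H(0;\tau)$ out of pieces that match $\mathcal{F}'(0)^2/\eta^3$ and $\mathcal{F}''(0) \eta(4\tau)/(\eta(2\tau)^2)$. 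The indefinite sum in \eqref{defineG} with these specializations produces, up to a factor of $\eta^3$, the triple sum of Corollary \ref{Pwrep}. The remaining holomorphic pieces arise from the $R^2$ cross-terms in the $\widehat{\mu}\cdot\widehat{\mu}$ product and from the boundary behavior of $\mu(z_1,z_2)$ and $\mu(z_1,z_3)$ at $z_1 = 0$; these must sum to $\tfrac14 - \tfrac12 \eta(4\tau)/\eta(2\tau)^2$, the first constant coming from a Gaussian integral evaluation using \eqref{E:Edef} and the second from standard $\vartheta$-identities at the torsion point $\tfrac{\tau}{2}+\tfrac14$.

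For part iii), I use the fact that $L$ annihilates holomorphic functions, so only the $R$-pieces inside the $\widehat{\mu}\bigl(z, \tfrac{\tau}{2}+\tfrac14;\tau\bigr)$ factors of $\mathcal{F}^{(k)}(0)$ contribute. Differentiating the product structure of $\mathcal{F}'(0)^2$ gives a cross term $2 \mathcal{F}'(0)_{\text{hol}} \cdot \partial_{\bar\tau}\mathcal{F}'(0)_{\text{non-hol}}$, and similarly for $\mathcal{F}''(0)$. Plugging in \eqref{E:dtauR} and \eqref{E:dtaudzALT} at the torsion point $z = \tau/2+1/4$ produces sums of the form $\sum_{n\in 1/2+\Z}(\cdots) q^{-n^2/2}$ times $e^{-2\pi a^2 v}/\sqrt{2v}$ with $a=1/2$, which by the Jacobi triple product identify with complex conjugates of $\eta(4\tau)^3$ and $\eta(2\tau)^5/(\eta(\tau)^2\eta(4\tau)^2)$. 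Matching prefactors then yields the coefficients $e^{3\pi i/4}\sqrt{2}/\pi$ and $-e^{\pi i/4}/(2\sqrt{2}\pi)$. The harmonic Maass form property of $f_2$ and the identification of its shadow follow by applying $\xi_{1/2}$: using \eqref{xi}, $\xi_{1/2}(\mathcal{F}'(0)/\eta^3)$ is essentially the same unary theta series that produced the $f_1 f_2$ term, rescaled by $\eta^3$, and a short computation yields $2\sqrt{2}e^{-\pi i/4}\pi\eta(4\tau)^3$. The main obstacle will be the bookkeeping in part ii): isolating the genuinely holomorphic residual pieces from the $R \cdot R$ interactions and confirming that they coalesce into the simple expression $\tfrac14 - \tfrac12\eta(4\tau)/\eta(2\tau)^2$ rather than a more complicated theta quotient.
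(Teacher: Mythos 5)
Your overall architecture is the same as the paper's: express everything through the completed building block $\vartheta(z)\widehat{\mu}(z,\tfrac{\tau}{2}+\tfrac14)$ via the identity \eqref{EqBRZMockModular}, Taylor-expand at the removable singularity $z=0$, extract the holomorphic residuals of the $R$-completion terms for part ii), and compute $L$ from \eqref{E:dtauR} and \eqref{E:dtaudzALT} for part iii). The main structural difference is that the paper routes the modularity and the matching with Corollary \ref{Pwrep} through the triple indefinite theta functions $\widehat{G}$ and $\widehat{H}$ and the auxiliary combinations $\widehat{\mathcal{H}}_1$ (shown to vanish) and $\widehat{\mathcal{H}}_2$ (shown to equal $-4i\eta^3\widehat{P}_\omega$), whereas you work directly with $\mathcal{F}$ and its derivatives.

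That directness creates one genuine gap, in part i). You assert that ``differentiating at $z=0$ boosts the weight of $\mathcal{F}^{(k)}(0)$ to $k$,'' i.e.\ that each Taylor coefficient separately inherits a clean weight-$(1+k)$ transformation. This is not automatic: $\mathcal{F}(0)\neq 0$ (see \eqref{E:F0}), and the elliptic shift needed to move $\tfrac{\gamma\tau}{2}+\tfrac14$ back to $\tfrac{1}{c\tau+d}\bigl(\tfrac{\tau}{2}+\tfrac14\bigr)$ introduces exponential factors that are \emph{linear} in $z$ (together with possible sign flips $z\mapsto -z$), so the transformation of $\mathcal{F}''(0)$ a priori mixes in $\mathcal{F}'(0)$ and $\mathcal{F}(0)$. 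The paper avoids this by proving modularity for $\widehat{\mathcal{H}}_2=-4iq^{-\frac14}\bigl[\partial_\zeta\bigl(\zeta\widehat{F}(z,\tfrac{\tau}{2}+\tfrac14,\tfrac{\tau}{2}+\tfrac34)\bigr)\bigr]_{\zeta=1}$, i.e.\ for the \emph{even} combination $\mathcal{F}(z)\mathcal{F}(-z)/\vartheta(z)$, in which the odd-order corrections cancel; the specific combination $\mathcal{F}(0)\mathcal{F}''(0)-\mathcal{F}'(0)^2$ appearing in \eqref{E:FTaylor} is a Taylor coefficient of this genuinely modular object, which is why $\widehat{P}_\omega$ is modular even though $\mathcal{F}''(0)$ alone need not be. Two smaller points: in part iii) the product rule gives $2\mathcal{F}'(0)\,\partial_{\bar\tau}\mathcal{F}'(0)$ with the \emph{full} (non-holomorphic) $\mathcal{F}'(0)$, not ``$\mathcal{F}'(0)_{\mathrm{hol}}$'' --- this is precisely why $f_2$ in the answer is a complete harmonic Maass form rather than its holomorphic part; and in part ii) the constant $\tfrac14$ arises from the boundary term $e^{\frac{\pi i}{4}}q^{\frac18}$ in $R\bigl(\tfrac{\tau}{2}+\tfrac14\bigr)$ (the $\sgn$-discrepancy at $n=0$) interacting with the $R\cdot R$ and $\mu\cdot R$ cross-terms, not from a Gaussian integral; getting it right requires the full cancellation bookkeeping of \eqref{E:H1hat-H1final} and \eqref{E:H2hat-H2final}.
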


\begin{proof}
We first relate $\bar{P}_\omega$ to a Jacobi-derivative of the indefinite theta-function
$G(z_1,z_2,z_3)$ defined in \eqref{defineG}.
Using the representation from Corollary \ref{Pwrep}, we obtain
\begin{align}\label{E:G+1/2}
\bar{P}_\omega(q)
=\frac{iq^{-\frac38}}{4(q)_\infty^3} \left(\left[\frac{\partial}{\partial \zeta} \left(\zeta^{\frac{1}{2}} G\left(z,\tau+\frac{1}{2},\tau+\frac{1}{2}\right)\right)\right]_{\zeta=1} \!\!\!- \left[\zeta\frac{\partial}{\partial \zeta} \left(\zeta^{\frac{1}{2}} G\left(z,\tau + \frac12,\tau + \frac12\right)\right)\right]_{\zeta=q}\right).
\end{align}

By the definition of $H(z;\tau)$, (\ref{E:G+1/2}) can be rewritten as
\begin{gather}\label{E:OverlinePToH}
\bar{P}_\omega(q)
=\frac{i}{4 \eta(\tau)^3}\left(
\left[\frac{\partial}{\partial\zeta}\left( \zeta^{-\frac12}H(z) \right)\right]_{\zeta=1}
-\left[\zeta\frac{\partial}{\partial\zeta}\left( \zeta^{-\frac12}H(z) \right)\right]_{\zeta=q}
\right)
\end{gather}
and the modular completion of (\ref{E:G+1/2}), which we show below is $\widehat{P}_\omega(\tau)$, becomes
\begin{align}
\label{E:Pw=H}
\frac{i}{4 \eta(\tau)^3}\left(\left[ \frac{\partial}{\partial \zeta}\left(\zeta^{-\frac{1}{2}}\widehat{H}(z)\right)\right]_{\zeta=1}-\left[\zeta \frac{\partial}{\partial \zeta}\left(\zeta^{-\frac{1}{2}}\widehat{H}(z)\right) \right]_{\zeta=q}\right).
\end{align}
Below we determine its holomorphic part.
Indeed, we see below that if $z$ is fixed, $\tau\mapsto\widehat{H}(z;\tau)-H(z;\tau)$ may be a mix of holomorphic and non-holomorphic functions.

We now regroup the terms from \eqref{E:Pw=H}, first noting that, for a function $f:\C\to\C$, we have
$$\left[\zeta\frac{\partial}{\partial\zeta}f(z)\right]_{\zeta=q}=\frac{1}{2\pi i}\left[\frac{\partial}{\partial z}f(z)\right]_{z=\tau}=\frac{1}{2\pi i}\left[\frac{\partial}{\partial z}f(z+\tau)\right]_{z=0}=\left[\frac{\partial}{\partial \zeta}f(z+\tau)\right]_{\zeta=1}.$$
Thus the second term in \eqref{E:Pw=H} can be expanded as
$$
-\left[\frac{\partial}{\partial\zeta}\left(q^{-\frac12}\zeta^{-\frac12}\widehat{H}(z+\tau)\right)\right]_{\zeta=1}
=-\frac12q^{-\frac12}\widehat{H}(\tau)
-\left[\frac{\partial}{\partial \zeta}\left(q^{-\frac12}\zeta^{-1}\widehat{H}(z+\tau)\right)\right]_{\zeta=1}.$$
We can now combine terms that satisfy the same modular transformations. For this, we set
\begin{align*}
\widehat{\mathcal{H}}_1(\tau):=-\frac12\left(\widehat{\mathcal{H}}_{11}(\tau)+\widehat{\mathcal{H}}_{12}(\tau)\right), \qquad
\widehat{\mathcal{H}}_2(\tau):=\widehat{\mathcal{H}}_{21}(\tau)-\widehat{\mathcal{H}}_{22}(\tau),
\end{align*}
where
\begin{alignat}{3}
\label{E:Hdefs}
\widehat{\mathcal{H}}_{11}(\tau)&:=\widehat{H}(0;\tau),& \qquad
& \widehat{\mathcal{H}}_{12}(\tau):=q^{-\frac12}\widehat{H}(\tau;\tau), \\
\widehat{\mathcal{H}}_{21}(\tau)&:=\left[\frac{\partial}{\partial \zeta} \widehat{H}(z;\tau)  \right]_{\zeta=1}, &
& \widehat{\mathcal{H}}_{22}(\tau):=\left[\frac{\partial}{\partial \zeta}\left(q^{-\frac12}\zeta^{-1}\widehat{H}(z+\tau;\tau)\right)\right]_{\zeta=1}. \notag
\end{alignat}
We see below that $\widehat{\mathcal{H}}_1$ and $\widehat{\mathcal{H}}_2$, respectively, satisfy modular transformations of weights $\frac32$ and $\frac52.$
In a similar fashion, we define the functions
\begin{align*}
\mathcal{H}_1(\tau) &:= -\frac{1}{2}\left(H(0;\tau)+q^{-\frac12}H(\tau;\tau)\right),
\\
\mathcal{H}_2(\tau) &:= \left[\frac{\partial}{\partial \zeta}H(z;\tau)  \right]_{\zeta=1}
		+\left[\frac{\partial}{\partial \zeta}\left(q^{-\frac12}\zeta^{-1}H(z+\tau;\tau)\right)\right]_{\zeta=1}.
\end{align*}
We note that in this notation \eqref{E:OverlinePToH} becomes
$\bar{P}_\omega=\frac{i}{4\eta^3}(\mathcal{H}_1+\mathcal{H}_2),$
and its modular completion is given by
$\frac{i}{4\eta^3}(\widehat{\mathcal{H}}_1+\widehat{\mathcal{H}}_2)$.
Again, we must show that this modular completion is in fact $\widehat{P}_\omega$.

We next prove modularity of $\widehat{\mathcal{H}}_1$ and $\widehat{\mathcal{H}}_2$. Firstly,
from Lemma \ref{GhatLemma}, we have for $\left(\begin{smallmatrix}a&b\\c&d\end{smallmatrix}\right)\in\Gamma$
\begin{align}
\label{E:H11mod}
&\widehat{\mathcal{H}}_{11}\left(\frac{a\tau+b}{c\tau+d} \right)=\psi\begin{pmatrix}a&b\\c&d\end{pmatrix}^{-3}e^{-\frac{\pi i}{2}\left(ab+\frac{cd}{4}\right)}(c\tau+d)^{\frac32}\widehat{\mathcal{H}}_{11}(\tau),\\
\label{E:H21mod}
&\widehat{\mathcal{H}}_{21}\left(\frac{a\tau+b}{c\tau+d} \right)
=\psi\begin{pmatrix}a&b\\c&d\end{pmatrix}^{-3}e^{-\frac{\pi i}{2}\left(ab+\frac{cd}{4}\right)} (c\tau+d)^{\frac52}\widehat{\mathcal{H}}_{21}(\tau).
\end{align}
The other two functions from \eqref{E:Hdefs} also satisfy modular transformations, which we see by rewriting them. A direct calculation,
with \eqref{E:Ghat=Fhat} and \eqref{Felliptic}, yields
\begin{equation}
\label{shiftM}
\widehat{H}(z+\tau)=q^{\frac{1}{4}}\zeta^2\sum_{0\leq\alpha,\beta\leq 1}{i^{\alpha+\beta}\widehat{F}\left(z,\frac{\tau}{2}+\frac{1}{4}+\frac{\alpha}{2},\frac{\tau}{2}+\frac{1}{4}+\frac{\beta}{2}\right)}.
\end{equation}
One can verify the transformation formulas for
$q^{-\frac14} \zeta \widehat{F}(z,\frac{\tau}{2}+\frac{1}{4}+\frac{\alpha}{2},\frac{\tau}{2}+\frac{1}{4}+\frac{\beta}{2})$
to find that the function
$q^{-\frac12}\zeta^{-1}\widehat{H}(z+\tau)$
satisfies the same modular transformation as in Lemma \ref{GhatLemma}.
Thus the analogue of \eqref{E:H11mod} also holds for
$\widehat{\mathcal{H}}_{12}$, and \eqref{E:H21mod} holds for $\widehat{\mathcal{H}}_{22}$.

Using \eqref{muhat}, the modular completion of $F$ requires the additional terms in $\widehat{F} - F$, which equal
\begin{align}
\notag
R^*(z_1,z_2,z_3;\tau) := & -\frac12\vartheta(z_1;\tau)\mu(z_1,z_2;\tau)R(z_1-z_3;\tau) -\frac12\vartheta(z_1;\tau)R(z_1-z_2;\tau)\mu(z_1,z_3;\tau)\\
& -\frac{i}{4}\vartheta(z_1;\tau)R(z_1-z_2;\tau)R(z_1-z_3;\tau)
-\frac{i\eta(\tau)^3\vartheta(z_2+z_3;\tau)}{2\vartheta(z_2;\tau)\vartheta(z_3;\tau)}R(z_1-z_2-z_3;\tau).
\label{E:R*def}
\end{align}
We use this to help
identify the excess terms in the modular completion $\widehat{\mathcal{H}}_1.$ In particular, plugging into the definition of $\widehat{H}$ as well as \eqref{shiftM}, we have
\begin{equation}\label{EqH1HatMinusH1}\begin{split}
&-2\left(\widehat{\mathcal{H}}_1-\mathcal{H}_1\right)\\
&=\sum_{0\le\alpha,\beta\le 1}i^{-\alpha-\beta}\left(
q^{-\frac14}R^*\left(0,\frac{\tau}{2}+\frac{1}{4}+\frac{\alpha}{2},\frac{\tau}{2}+\frac{1}{4}+\frac{\beta}{2}\right)
+q^{\frac14}R^*\left(\tau,\frac{\tau}{2}+\frac{1}{4}+\frac{\alpha}{2},\frac{\tau}{2}+\frac{1}{4}+\frac{\beta}{2}\right)
\right).
\end{split}\end{equation}
Recalling \eqref{limit}, we compute that
\begin{equation}
\label{E:R0z2z3}
R^*(0,z_2,z_3)= \frac{i}{2}\eta^3 \left(\frac{R(z_2)}{\vartheta(z_3)}+\frac{R(z_3)}{\vartheta(z_2)} - \frac{\vartheta(z_2+z_3)}{\vartheta(z_2)\vartheta(z_3)}R(z_2+z_3) \right).
\end{equation}
Similarly,
a calculation with Lemmas \ref{THETtrans} and \ref{muhattranlem} yields
\begin{multline}
\label{E:Rtauz2z3}
R^*(\tau,z_2,z_3)= -q^{\frac12}\zeta_2^{-1}\zeta_3^{-1}R^*\left(0,z_2,z_3\right)
+ iq^{\frac38} \zeta_2^{-\frac12} \zeta_3^{-\frac12} \eta^3
\left(\frac{\zeta_3^{-\frac12}}{\vartheta(z_3)} + \frac{\zeta_2^{-\frac12}}{\vartheta(z_2)} - \frac{\vartheta(z_2+z_3)}{\vartheta(z_2) \vartheta(z_3)}\right).
\end{multline}

We now determine whether $\widehat{\mathcal{H}}_1-\mathcal{H}_1$ adds any additional holomorphic terms to $\bar{P}_\omega$.
The first term of \eqref{E:Rtauz2z3} combined with (\ref{E:R0z2z3}) contributes
the following to \eqref{EqH1HatMinusH1}
\begin{align*}
&\sum_{0\le\alpha,\beta\le 1}i^{-\alpha-\beta}\left(
q^{-\frac14}R^*\left(0,\frac{\tau}{2}+\frac{1}{4}+\frac{\alpha}{2},\frac{\tau}{2}+\frac{1}{4}+\frac{\beta}{2}\right)
+(-1)^{\alpha+\beta}q^{-\frac14}R^*\left(0,\frac{\tau}{2}+\frac{1}{4}+\frac{\alpha}{2},\frac{\tau}{2}+\frac{1}{4}+\frac{\beta}{2}\right)
\right)
\\
&= 2q^{-\frac{1}{4}}\left(R^*\left(0,\frac{\tau}{2}+\frac{1}{4},\frac{\tau}{2}+\frac{1}{4} \right) - R^* \left(0,\frac{\tau}{2}+\frac{3}{4},\frac{\tau}{2}+\frac{3}{4} \right)\right) \\
&= iq^{-\frac{1}{4}}\eta^3\left(\frac{2R\left(\frac{\tau}{2}+\frac{1}{4}\right)}{\vartheta \left(\frac{\tau}{2}+\frac{1}{4} \right)}-\frac{2R\left(\frac{\tau}{2}+\frac{3}{4}\right)}{\vartheta \left( \frac{\tau}{2}+\frac{3}{4}\right)}-\frac{\vartheta \left(\tau + \frac{1}{2}\right)}{\vartheta \left(\frac{\tau}{2}+\frac{1}{4}\right)^2}R\left(\tau + \frac{1}{2}\right)+\frac{\vartheta \left(\tau + \frac{3}{2}\right)}{\vartheta \left(\frac{\tau}{2}+\frac{3}{4} \right)^2}R\left(\tau +\frac{3}{2}\right)\right).
\end{align*}
Noting that
\begin{align}
\notag
&\vartheta\left(\tau+\frac12\right)=-2q^{-\frac12}\frac{\eta(2\tau)^2}{\eta(\tau)},\quad \vartheta\left(\tau + \frac{3}{2}\right) = - \vartheta \left(\tau + \frac{1}{2}\right),  \\
\label{shiftR}
&\vartheta\left(\frac{\tau}{2} + \frac{3}{4}\right) = -i \vartheta\left(\frac{\tau}{2}+\frac{1}{4} \right), \quad \vartheta\left(\frac{\tau}{2}+\frac14\right)=e^{-\frac{3\pi i}{4}}q^{-\frac18}\frac{\eta (2\tau)^2}{\eta(4\tau)},\\
 \notag
&R\left(\tau+\frac12\right)=2iq^{\frac38}, \quad R\left(\frac{\tau}{2} +\frac14 \right)  = i R\left(\frac{\tau}{2} + \frac34\right) - 2 e^{-\frac{3 \pi i}{4}} q^\frac18, \quad
R\left(\tau + \frac{3}{2}\right) = -R\left( \tau + \frac{1}{2}\right),
\end{align}
we obtain that the previous expression equals
\begin{equation}
\label{E:Rterms}
-\frac{4i\eta(\tau)^3 \eta(4\tau)}{\eta(2\tau)^2} + \frac{8iq^{-\frac18} \eta(\tau)^2 \eta(4\tau)^2}{\eta(2\tau)^2}.
\end{equation}

The second and third terms from \eqref{E:Rtauz2z3} contribute (after switching the roles of $\alpha$ and $\beta$ in the third term)
\begin{align*}
	2e^{-\frac{\pi i}{4}}q^{-\frac{1}{8}}\eta^3
	\sum_{0\le \alpha,\beta\le 1}
	 \frac{(-1)^{\alpha}i^{\beta}}{\vartheta\left(\frac{\tau}{2}+\frac{1}{4}+\frac{\beta}{2}\right)}
=0.
\end{align*}


For the final term of \eqref{E:Rtauz2z3}, we note that $\vartheta(\tau+\frac12+\frac{\alpha+\beta}{2})=0$
if $\alpha\not\equiv\beta\pmod{2}$. Thus the corresponding sum simplifies to
\begin{align}
\notag
-q^{\frac{1}{8}}&\eta^3 \sum_{0 \leq \alpha, \beta \leq 1}\frac{(-1)^{\alpha+\beta} \vartheta \left(\tau + \frac{1}{2} + \frac{\alpha + \beta}{2}\right)}{\vartheta \left(\frac{\tau}{2}+\frac{1}{4}+\frac{\alpha}{2} \right) \vartheta \left(\frac{\tau}{2}+\frac{1}{4}+\frac{\beta}{2} \right)}
=-q^\frac18 \eta^3 \left(\frac{\vartheta\left(\tau + \frac12\right)}{\vartheta\left(\frac{\tau}{2} + \frac14\right)^2}
+ \frac{\vartheta\left(\tau + \frac32\right)}{\vartheta\left(\frac{\tau}{2} + \frac34\right)^2}\right)\\
&= -2q^\frac18 \eta^3 \frac{\vartheta\left(\tau + \frac12\right)}{\vartheta\left(\frac{\tau}{2} + \frac14\right)^2}
 = -\frac{4iq^{-\frac18} \eta(\tau)^2 \eta(4\tau)^2}{\eta(2\tau)^2}, \label{E:H12final}
\end{align}
where we use (\ref{shiftR}) to rewrite $\vartheta(\tau + \frac12)$ and
$\vartheta(\frac{\tau}{2} + \frac14)$ in terms of eta quotients.
Combining with \eqref{E:Rterms}, we then obtain
\begin{equation}
\label{E:H1hat-H1final}
\widehat{\mathcal{H}}_1(\tau)-\mathcal{H}_1(\tau)=\frac{2i\eta(\tau)^3 \eta(4\tau)}{\eta(2\tau)^2} - \frac{2iq^{-\frac18} \eta(\tau)^2 \eta(4\tau)^2}{\eta(2\tau)^2}.
\end{equation}

We next consider $\widehat{\mathcal{H}}_2-\mathcal{H}_2$, and note that
\begin{align}\label{EqH2HatMinusH2}
\widehat{\mathcal{H}}_2-\mathcal{H}_2
&=
\left[\frac{\partial}{\partial\zeta}\left( \sum_{0\le\alpha,\beta\le1} i^{-\alpha-\beta}\left(
q^{-\frac14}\zeta R^*\left( z, \frac{\tau}{2}+\frac14+\frac{\alpha}{2},\frac{\tau}{2}+\frac14+\frac{\beta}{2} \right)
\notag\right.\right.\right.\\&\left.\left.\left.\qquad\qquad\qquad
\quad-q^{\frac14} R^*\left( z+\tau, \frac{\tau}{2}+\frac14+\frac{\alpha}{2},\frac{\tau}{2}+\frac14+\frac{\beta}{2} \right)
\right)\right)\right]_{\zeta=1}
.
\end{align}
We again need to determine the holomorphic components of these terms for our specific choices of $z_j$.
We first evaluate the shifted term.
Using \eqref{E:R*def} and Lemmas \ref{THETtrans} and \ref{muhattranlem}, and then calculating the derivative directly, we find that
\begin{align}\notag
&\left[\frac{\partial}{\partial\zeta} R^*(z+\tau,z_2,z_3)\right]_{\zeta=1}
\\
\notag
&=-q^\frac12 \zeta_2^{-1} \zeta_3^{-1} \left[ \frac{\partial}{\partial \zeta} \left(\zeta R^*(z, z_2,z_3)\right)\right]_{\zeta = 1}
- \zeta_2^{-1} \zeta_3^{-\frac12} q^\frac38 \left[\frac{\partial}{\partial \zeta} \left(\zeta^\frac12 \vartheta(z) \mu(z,z_2)\right)\right]_{\zeta = 1} \\
\notag
& \quad - \zeta_3^{-1} \zeta_2^{-\frac12} q^\frac38 \left[\frac{\partial}{\partial \zeta} \left(\zeta^\frac12 \vartheta(z) \mu(z,z_3)\right)\right]_{\zeta = 1}
+ \frac{i}{2} \zeta_2^{-1} \zeta_3^{-\frac12} q^\frac38 \left[ \frac{\partial}{\partial \zeta} \left(\zeta^\frac12 \vartheta(z) R(z-z_2)\right)\right]_{\zeta=1} \\
\notag
&\quad+ \frac{i}{2} \zeta_3^{-1} \zeta_2^{-\frac12} q^\frac38 \left[ \frac{\partial}{\partial \zeta} \left(\zeta^\frac12 \vartheta(z) R(z-z_3)\right)\right]_{\zeta=1}
-2i q^\frac14 \zeta_2^{-\frac12} \zeta_3^{-\frac12} \left[ \frac{\partial}{\partial \zeta} \vartheta(z) \right]_{\zeta = 1} \\
\notag
&\quad -\frac{i}{2} \zeta_2^{-1} \zeta_3^{-\frac12} q^\frac38 \left[ \frac{\partial}{\partial \zeta} \left(\zeta^\frac12 \vartheta(z) R(z-z_2)\right)\right]_{\zeta = 1}
-\frac{i}{2} \zeta_3^{-1} \zeta_2^{-\frac12} q^\frac38 \left[ \frac{\partial}{\partial \zeta} \left(\zeta^\frac12 \vartheta(z) R(z-z_3)\right)\right]_{\zeta = 1} \\
\notag
& \quad+ i q^\frac14 \zeta_2^{-\frac12} \zeta_3^{-\frac12} \left[\frac{\partial}{\partial \zeta} \vartheta(z)\right]_{\zeta = 1}
- \frac{i q^\frac38 \zeta_2^{-\frac12} \zeta_3^{-\frac12} \eta^3 \vartheta(z_2 + z_3) }{2\vartheta(z_2) \vartheta(z_3)}.
\end{align}

After cancellation, we are left with the following terms:
\begin{equation}\label{E:H22nonhterms}\begin{split}
&-q^\frac12 \zeta_2^{-1} \zeta_3^{-1} \left[ \frac{\partial}{\partial \zeta} \left(\zeta R^*(z, z_2,z_3)\right)\right]_{\zeta = 1}
- \zeta_2^{-1} \zeta_3^{-\frac12} q^\frac38 \left[\frac{\partial}{\partial \zeta} \left(\zeta^\frac12 \vartheta(z) \mu(z,z_2)\right)\right]_{\zeta = 1} \\
& \quad - \zeta_3^{-1} \zeta_2^{-\frac12} q^\frac38 \left[\frac{\partial}{\partial \zeta} \left(\zeta^\frac12 \vartheta(z) \mu(z,z_3)\right)\right]_{\zeta = 1}
-i q^\frac14 \zeta_2^{-\frac12} \zeta_3^{-\frac12} \left[ \frac{\partial}{\partial \zeta} \vartheta(z) \right]_{\zeta = 1}
- \frac{i q^\frac38 \zeta_2^{-\frac12} \zeta_3^{-\frac12} \eta^3 \vartheta(z_2 + z_3) }{2\vartheta(z_2) \vartheta(z_3)}.
\end{split}\end{equation}

We now evaluate the contribution of each term of
\eqref{E:H22nonhterms} to \eqref{EqH2HatMinusH2}.
Recalling \eqref{limit}, we find that the fourth term of \eqref{E:H22nonhterms} sums  to
\begin{equation*}
i\eta^3\sum_{0\leq\alpha,\beta\leq 1}(-1)^{\alpha+\beta}=0.
\end{equation*}

For convenience, we define
\begin{align*}
f(z_j;\tau)
&:=
\left[ \frac{\partial}{\partial \zeta}\left(
\zeta^{\frac12}\vartheta(z;\tau)\mu(z,z_j;\tau)
\right)\right]_{\zeta=1}
=
\frac{\zeta_j^{\frac12}}{2}
\sum_{n\in\Z}\frac{(-1)^n(2n+1)q^{\frac{n(n+1)}{2}}}{1-\zeta_jq^n}
.
\end{align*}
By rewriting the second and third terms in terms of $f(z_j)$, we find that they also cancel completely, as
\begin{align*}
&-e^{\frac{\pi i}{4}}q^{-\frac18}
\sum_{0\le\alpha,\beta\le1}i^{-\alpha-\beta}\left(
(-1)^\alpha i^{-\beta} f\left(\frac{\tau}{2}+\frac{1}{4}+\frac{\alpha}{2}\right)
	+(-1)^\beta i^{-\alpha} f\left(\frac{\tau}{2}+\frac{1}{4}+\frac{\beta}{2}\right)
\right)
\\&=
-2e^{\frac{\pi i}{4}}q^{-\frac18}
\sum_{0\le\alpha,\beta\le1}i^\alpha (-1)^{\beta} f\left(\frac{\tau}{2}+\frac{1}{4}+\frac{\alpha}{2}\right)
=0.
\end{align*}

The fifth term of \eqref{E:H22nonhterms} does make a contribution to $\widehat{\mathcal{H}}_2 - {\mathcal{H}}_2$, which simplifies as
\begin{align*}
& -q^{\frac{1}{4}}\eta^3 \sum_{0\leq \alpha, \beta \leq 1}i^{-\alpha-\beta}\left(-\frac{i}{2}q^\frac{3}{8} e^{-\pi i \left(\frac{\tau}{2}+\frac{1}{4}+\frac{\alpha}{2} \right)} e^{-\pi i \left(\frac{\tau}{2} + \frac{1}{4} + \frac{\beta}{2}\right)} \frac{\vartheta\left(\tau + \frac{1}{2} +\frac{\alpha+\beta}{2}\right)}{\vartheta \left(\frac{\tau}{2} + \frac{1}{4} + \frac{\alpha}{2} \right)\vartheta \left(\frac{\tau}{2}+\frac{1}{4}+\frac{\beta}{2}\right)} \right) \\
& = \frac{2iq^{-\frac18} \eta(\tau)^2 \eta(4\tau)^2}{\eta(2\tau)^2},
\end{align*}
where we have the same cancellations as in \eqref{E:H12final}.

Finally, the first term of \eqref{E:H22nonhterms} combines with
$R^*(z,\frac{\tau}{2}+\frac{1}{4}+\frac{\alpha}{2},\frac{\tau}{2}+\frac{1}{4}+\frac{\beta}{2})$ from \eqref{EqH2HatMinusH2} to give
\begin{align}
\notag &q^{-\frac14}\left[\frac{\partial}{\partial\zeta}\left(\zeta\sum_{0\leq\alpha,\beta,\leq 1}\left(i^{-\alpha-\beta}-i^{\alpha+\beta}\right)R^*\left(z,\frac{\tau}{2}+\frac14+\frac{\alpha}{2},\frac{\tau}{2}+\frac14+\frac{\beta}{2}\right)\right)\right]_{\zeta=1} \\
\label{E:R*cancel}
=\ &-4i q^{-\frac14}\left[\frac{\partial}{\partial\zeta}\left(\zeta R^*\left(z,\frac{\tau}{2}+\frac14,\frac{\tau}{2}+\frac34\right)\right)\right]_{\zeta=1},
\end{align}
where in the final equality we use the symmetry $R^\ast(z, z_1, z_2)=R^*(z, z_2, z_1).$

To evaluate the above, recall that
\begin{align}\notag
&\left[\frac{\partial}{\partial\zeta}\left( \zeta R^*(z,z_2,z_3) \right)\right]_{\zeta=1}
\\
\notag
&=
\Bigg[\frac{\partial}{\partial\zeta}\left(
-\frac{1}{2}\zeta\vartheta(z)\mu(z,z_2)R(z-z_3)
-\frac{1}{2}\zeta\vartheta(z)\mu(z,z_3)R(z-z_2)
-\frac{i}{4}\zeta\vartheta(z)R(z-z_2)R(z-z_3)
\right.\\
\label{diffRstar}
&\qquad\qquad\left.
-\frac{i\eta^3\vartheta\left(z_2+z_3\right)}{2\vartheta(z_2)\vartheta(z_3)} \zeta R(z-z_2-z_3)
\right)\Bigg]_{\zeta=1}.
\end{align}
Plugging in our specific values of $z_2$ and $z_3$, we begin with the fourth term of
\eqref{diffRstar}. From the fact that $\vartheta$ vanishes at $\Z + \Z \tau$, this term vanishes.

Next we consider the third term of \eqref{diffRstar} when plugging into \eqref{E:R*cancel}. Noting that we need to differentiate $\vartheta$ (see \eqref{limit}) and using our specific values of $z_2, z_3$, along with the fact that $R$ is even, this becomes
\begin{equation}
\label{E:R*third}
-iq^{-\frac14} \eta^3 R\left(\frac{\tau}{2} + \frac14\right) R\left(\frac{\tau}{2} + \frac34\right).
\end{equation}

Finally, the first and second terms of \eqref{diffRstar} result in
\begin{align}
\nonumber
&2iq^{-\frac14}\Bigg(f\left(\frac{\tau}{2}+\frac14\right)R\left(\frac{\tau}{2}+\frac34\right)+f\left(\frac{\tau}{2}+\frac34\right)R\left(\frac{\tau}{2}+\frac14\right)\\
\label{writef}
&\quad +g\left(\frac{\tau}{2}+\frac14\right)\left[\frac{\partial}{\partial\zeta}\left(\zeta^{\frac12}R\left(z-\frac{\tau}{2}-\frac34\right)\right)\right]_{\zeta=1}+g\left(\frac{\tau}{2}+\frac34\right)\left[\frac{\partial}{\partial\zeta}\left(\zeta^{\frac12}R\left(z-\frac{\tau}{2}-\frac14\right)\right)\right]_{\zeta=1}\Bigg),
\end{align}
where
\[
g(z;\tau):=\zeta^{\frac12}\sum_{n\in\Z}\frac{(-1)^n q^{\frac{n(n+1)}{2}}}{1-\zeta q^n}.
\]

We now determine the holomorphic terms in \eqref{E:R*third} and \eqref{writef}.  We start with those that arise from not differentiating $R$.
Noting that
$$
\sgn\left(n - \frac12\right) - \sgn(n) = \begin{cases} 0 \qquad & \text{if } n \neq 0, \\
-1 & \text{if } n = 0,
\end{cases}
$$
we obtain
\begin{align*}
R\left(\frac{\tau}{2}+\frac14\right)
&=
\sum_{n\in\frac12+\Z}\left(\sgn\left(n\right)-E\left(\left(n+\frac12\right)\sqrt{2v}\right)\right)
	(-1)^{n-\frac12}e^{-\frac{\pi in}{2}}q^{-\frac{n(n+1)}{2}}
\\
&=
e^{\frac{\pi i}{4}}q^{\frac18}-e^{\frac{\pi i}{4}}q^{\frac18}
\sum_{n\in\Z}i^n\left(\sgn\left(n\right)-E\left(n\sqrt{2v}\right)\right)q^{-\frac{n^2}{2}}
=
e^{\frac{\pi i}{4}}q^{\frac18}+N_1(\tau)
,
\end{align*}
where in the last line we use that the sum over even integers $n$ vanishes, and where we set
\[
N_1(\tau):=  e^{-\frac{\pi i}{4}}q^{\frac18}
\sum_{n \in \Z} (-1)^n \left(\sgn(2n+1) - E\left((2n+1) \sqrt{2v}\right)\right) q^{-\frac12 (2n+1)^2}.
\]
Now $N_1$ is ``purely'' non-holomorphic, by which we mean that the coefficients of its $q$-expansion are incomplete Gamma functions with arguments in $v$ (cf. \eqref{E:Edef}).
Moreover, \eqref{shiftR} gives that
\begin{equation*}
R\left(\frac{\tau}{2}+\frac{3}{4}\right)
=-iN_1(\tau)+e^{\frac{3 \pi i}{4}}q^{\frac{1}{8}}.
\end{equation*}
We conclude that the holomorphic contribution of \eqref{E:R*third} to \eqref{EqH2HatMinusH2} is
$$
i\eta^{3}.
$$

Similarly, the holomorphic components from the first two terms in \eqref{writef} simplify as
\begin{align*}
2&e^{\frac{3\pi i}{4}}q^{-\frac18} \left(if \left(\frac{\tau}{2}+\frac{1}{4}\right)+f\left(\frac{\tau}{2}+\frac{3}{4}\right)\right)
=
-2iq^{\frac{1}{8}}\sum_{n \in \Z} \frac{\left(-1\right)^n\left(2n+1\right)q^{\frac{n(n+1)}{2}}}{1+q^{2n+1}}
\\
&=-iq^{\frac{1}{8}}\sum_{n \in \Z}(-1)^n(2n+1)q^{\frac{n(n+1)}{2}}=
-2i\eta^3.
\end{align*}

We rewrite the third term of \eqref{writef} as
\begin{align*}
& g\left(\frac{\tau}{2} + \frac14 \right) \left[\frac{\partial}{\partial \zeta} \left(- i \zeta^{-\frac12} R\left(-z-\frac{\tau}{2}-\frac14\right) +2e^{\frac{3\pi i}{4}}q^{\frac18}\right)\right]_{\zeta = 1} \\
& = -i g\left(\frac{\tau}{2} + \frac14 \right) \left[-\frac{\partial}{\partial \zeta} \left(\zeta^\frac12 R\left(z-\frac{\tau}{2}-\frac14\right)\right)\right]_{\zeta = 1}.
\end{align*}
The third and fourth terms (including the outside factor) now become
\begin{equation}\label{Rg} -2q^{-\frac14}\left[\frac{\partial}{\partial\zeta}\left(\zeta^{\frac12}R\left(z-\frac{\tau}{2}-\frac14\right)\right)\right]_{\zeta=1}\left(g\left(\frac{\tau}{2}+\frac14\right)-ig\left(\frac{\tau}{2}+\frac34\right)\right).
\end{equation}
To compute the derivative of $R$, we write
\begin{align*}
\zeta^{\frac12}R\left(z-\frac{\tau}{2}-\frac14\right)
=
e^{\frac{\pi i}{4}}q^{\frac18}+e^{\frac{\pi i}{4}}q^{\frac18}\sum_{n\in\Z} \left(\sgn(n)-E\left(\left(n+\frac{y}{v}\right)\sqrt{2v}\right)\right) i^{-n}q^{-\frac{n^2}{2}}\zeta^{-n}.
\end{align*}
Recalling \eqref{E:Edef}, we then find that
\begin{align*}
&\hspace{-15mm}\left[\frac{\partial}{\partial\zeta}\left( \zeta^{\frac12}R\left(z-\frac{\tau}{2}-\frac14\right)\right)\right]_{\zeta=1}
\\&=
	-e^{\frac{\pi i}{4}}q^{\frac18}\sum_{n\in\Z} i^{-n}n\left(\sgn(n)-E\left(n\sqrt{2v}\right)\right) q^{-\frac{n^2}{2}}
+
\frac{e^{\frac{\pi i}{4}}}{\sqrt{2v}\pi}q^{\frac18}\sum_{n\in\Z}i^{-n} e^{-\pi in^2\overline{\tau}}
\\
&=	-2e^{\frac{\pi i}{4}}q^{\frac18}\sum_{n\in\Z}(-1)^n n\left(\sgn(n)-E\left(2n\sqrt{2v}\right)\right) q^{-2n^2}
+
\frac{e^{\frac{\pi i}{4}}q^{\frac18}}{\pi\sqrt{2v}}
\sum_{n\in\Z} (-1)^n e^{-4\pi in^2\bar{\tau}}
\end{align*}
since for both summands the sums over odd integers $n$ vanish.
Thus \eqref{Rg} is purely non-holomorphic. We conclude that $\widehat{\mathcal{H}}_2-\mathcal{H}_2$ has the holomorphic part
\begin{equation}
\label{E:H2hat-H2final}
-i\eta(\tau)^3+2iq^{-\frac{1}{8}}\frac{\eta(\tau)^2\eta(4\tau)^2}{\eta(2\tau)^2}.
\end{equation}

Since \eqref{E:Pw=H} is exactly $\frac{i}{4 \eta^3} (\widehat{\mathcal{H}}_1 + \widehat{\mathcal{H}}_2)$, we can now find the holomorphic part by combining the above calculations above. In particular, we use the decomposition
$$
\widehat{\mathcal{H}}_1 + \widehat{\mathcal{H}}_2 = \left(\widehat{\mathcal{H}}_1 - \mathcal{H}_1\right) + \left(\widehat{\mathcal{H}}_2 - \mathcal{H}_2\right) + \mathcal{H}_1 + \mathcal{H}_2.
$$
Recalling \eqref{E:OverlinePToH}, and using \eqref{E:H1hat-H1final} and \eqref{E:H2hat-H2final}, the holomorphic part of $\widehat{\mathcal H}_1+\widehat{\mathcal H}_2$ equals
\begin{align}\label{E:holomorphicPart}
& \left(\frac{2i \eta(\tau)^3 \eta(4\tau)}{\eta(2\tau)^2} - \frac{2iq^{-\frac18} \eta(\tau)^2 \eta(4\tau)^2}{\eta(2\tau)^2}\right)
+ \left(-i\eta(\tau)^3 + \frac{2iq^{-\frac18} \eta(\tau)^2 \eta(4\tau)^2}{\eta(2\tau)^2}\right) - 4i\eta(\tau)^3 \bar{P}_\omega(q)
\notag \\
& = -4i\eta(\tau)^3 \left(\bar{P}_\omega(q) + \frac14 - \frac{\eta(4\tau)}{2\eta(2\tau)^2}\right).
\end{align}

Having found the relation between $\widehat{\mathcal{H}}_1+\widehat{\mathcal{H}}_2$ and $\bar P_\omega$, we next connect it to $\widehat{P}_\omega$. We start by showing
that $\widehat{\mathcal{H}}_1=0$. By (\ref{shiftM}), a direct calculation gives that
\begin{align*}
\widehat{\mathcal{H}}_1 & = q^{-\frac14} \left(\widehat{F}\left(0, \frac{\tau}{2} + \frac34, \frac{\tau}{2} + \frac34\right) - \widehat{F}\left(0, \frac{\tau}{2} + \frac14, \frac{\tau}{2} + \frac14\right)\right).
\end{align*}
We remember that $\widehat{F}(z, z_2, z_3)$ has a removable singularity at $z=0$, and
so by applying Lemmas \ref{THETtrans} and \ref{muhattranlem},
we obtain that
\begin{align*}
\widehat{F}\left(0, \frac{\tau}{2}+\frac34, \frac{\tau}{2}+\frac34\right)
 = \widehat{F}\left(0, \frac{\tau}{2}+\frac14;\frac{\tau}{2}+\frac14\right)
.
\end{align*}
From this we directly conclude that $\widehat{\mathcal{H}}_1=0$.

We next rewrite $\widehat{\mathcal{H}}_2$.
As a first simplification, we find
\begin{align*}
&\widehat{H}(z) - q^{-\frac12} \zeta^{-1} \widehat{H}(z + \tau)\\
=\ & q^{-\frac14} \zeta \sum_{0\leq\alpha,\beta\leq 1}\left(i^{-\alpha-\beta}-i^{\alpha+\beta}\right)
\widehat{F}\left(z,\frac{\tau}{2}+\frac14+\frac{\alpha}{2},\frac{\tau}{2}+\frac14+\frac{\beta}{2}\right) \\
=\ &-2i q^{-\frac14} \zeta\left(\widehat{F}\left(z,\frac{\tau}{2}+\frac34,\frac{\tau}{2}+\frac14\right)
+ \widehat{F}\left(z,\frac{\tau}{2}+\frac14,\frac{\tau}{2}+\frac34\right)\right)
=-4iq^{-\frac14}\zeta \widehat{F}\left(z,\frac{\tau}{2}+\frac14,\frac{\tau}{2}+\frac34\right),
\end{align*}
noting that $\widehat{F}(z,z_2,z_3)=\widehat{F}(z,z_3,z_2)$.
Thus we have that
\begin{equation*}
\widehat{\mathcal{H}}_2 = -4i q^{-\frac14} \left[\frac{\partial}{\partial \zeta}\left(\zeta\widehat{F}\left(z,\frac{\tau}{2}+\frac14,\frac{\tau}{2}+\frac34\right)\right)\right]_{\zeta = 1}.
\end{equation*}
%
We then compute
\begin{align*}
&\widehat F\left(z,\frac{\tau}{2}+\frac14,\frac{\tau}{2}+\frac34\right)
\\
=\ &i\vartheta(z)\widehat{\mu}\left(z,\frac{\tau}{2}+\frac14\right)\widehat{\mu}\left(z,\frac{\tau}{2}+\frac34\right)-\frac{\eta^3}{\vartheta\left(\frac{\tau}{2}+\frac14\right)\vartheta\left(\frac{\tau}{2}+\frac34\right)}\lim_{w\rightarrow 0}\left(\vartheta(w+\tau+1)\widehat{\mu}(z,w+\tau+1)\right).
\end{align*}
By \eqref{shiftR} and Lemmas \ref{THETtrans} and \ref{muhattranlem},
and further applying \eqref{limit}, we obtain
\begin{align}
\label{E:H2=F}
\widehat{\mathcal{H}}_2=\ &-4iq^{-\frac14}\left[\frac{\partial}{\partial \zeta}\left(
i\vartheta(z)\widehat{\mu}\left(z,\frac{\tau}{2}+\frac14\right)\widehat{\mu}\left(-z,\frac{\tau}{2}+\frac14\right)
-\frac{\eta^6}{\vartheta\left(\frac{\tau}{2}+\frac14\right)^2\vartheta(z)}\right)\right]_{\zeta=1} \notag\\
=\ &-4i \left[ \frac{\partial}{\partial \zeta} \left(\frac{1}{\vartheta(z)} \left\{-\mathcal{F}(z)\mathcal{F}(-z)-\frac{\eta^6q^{-\frac{1}{4}}}{\vartheta\left(\frac{\tau}{2}+\frac{1}{4}\right)^2} \right\} \right)\right]_{\zeta=1},
\end{align}
where $\mathcal F$ is defined in \eqref{defineF}.

In order to evaluate this derivative, we calculate the Laurent series of the interior expression and show that it has a removable singularity. First, \eqref{limit} implies that
\begin{equation*}
\frac{1}{\vartheta(z)}= -\frac{1}{2\pi \eta^3}\frac1z-\frac{\vartheta^{(3)}(0)}{24\pi^2 \eta^6}z+O\left(z^3\right).
\end{equation*}
The Taylor expansion of the bracketed terms in \eqref{E:H2=F} is
\begin{equation}
\label{E:FTaylor}
-\left(\mathcal{F}(0)^2 + \frac{\eta^6q^{-\frac{1}{4}}}{\vartheta\left(\frac{\tau}{2}+\frac{1}{4}\right)^2}\right)
-\left(\mathcal{F}(0)\mathcal{F}''(0)-\mathcal{F}'(0)^2 \right)z^2+O\left(z^3\right).
\end{equation}
We now show that the constant term of this expansion vanishes. Indeed, by definition
\begin{equation}
\label{E:calF}
\mathcal{F}(z) =q^{-\frac18} \zeta^\frac12 \vartheta(z) \mu\left(z, \frac{\tau}{2} + \frac14\right)
+ \frac{i}{2} q^{-\frac18}\zeta^\frac12 \vartheta(z) R\left(z - \frac{\tau}{2} - \frac14\right),
\end{equation}
and \eqref{limit} then implies that
\begin{equation}\label{E:F0}
\mathcal{F}(0)=-\frac{i\eta^3 q^{-\frac{1}{8}}}{\vartheta \left(\frac{\tau}{2}+\frac{1}{4}\right)}.
\end{equation}
Recalling \eqref{shiftR}, and plugging \eqref{E:FTaylor} back in to \eqref{E:H2=F}, we conclude that
\begin{equation*}
\widehat{\mathcal{H}}_2(\tau)=\frac{1}{\pi^2\eta(\tau)^3} \left(\mathcal{F}'(0)^2-e^{\frac{\pi i}{4}}\frac{\eta(\tau)^3\eta(4\tau)}{\eta(2\tau)^2}\mathcal{F}''(0)\right)
=
-4i\eta(\tau)^3\widehat{P}_\omega(\tau).
\end{equation*}

We are now in a position to prove the stated properties of $\widehat{P}_\omega$. In particular, we see
that i) follows from \eqref{E:H21mod} and \eqref{E:F0}, along with the fact that
\begin{gather*}
\psi\begin{pmatrix}a&b\\c&d\end{pmatrix}^{-6}e^{-\frac{\pi i}{2}\left(ab+\frac{cd}{4}\right)}
=e^{\frac{\pi ic}{8}}
\end{gather*}
on $\Gamma$, and ii) follows from (\ref{E:holomorphicPart}).

Thus we are left to show iii).
For this we use \eqref{E:dtauR} and \eqref{E:calF} to compute that (noting that $\vartheta$ must differentiated with respect to $\zeta$)
\begin{equation*}
\frac{\partial}{\partial\overline{\tau}}\mathcal{F}'(0)
=-\pi iq^{-\frac18} \eta(\tau)^3 \frac{\partial}{\partial\overline{\tau}}R\left(\frac{\tau}{2}+\frac14 \right)
=
\frac{\pi e^{\frac{\pi i}{4}} \eta(\tau)^3}{\sqrt{2v}}\sum_{n\in\Z}i^n ne^{-\pi i n^2\overline{\tau}}.
\end{equation*}
Noting that the sum over even integers $n$ vanishes, we obtain
\begin{equation}
\label{4.1}\
\frac{\pi i e^{\frac{\pi i}{4}}\eta(\tau)^3}{\sqrt{2v}} \sum_{n\in\Z}(-1)^n (2n+1) e^{-\pi i(2n+1)^2\overline{\tau}}
=\frac{\pi e^{\frac{3\pi i}{4}}\sqrt{2}}{\sqrt{v}} \eta(\tau)^3 \eta\left(-4\overline{\tau}\right)^3.
\end{equation}

Next we have
\[
\frac{\partial}{\partial\overline{\tau}}\mathcal{F}^{''}(0)=q^{-\frac18}\frac{i}{2}\left[\frac{\partial^2}{\partial z^2}
\frac{\partial}{\partial\overline{\tau}}\left(\zeta^{\frac12}\vartheta(z)R\left(z-\frac{\tau}{2}-\frac14\right)\right)\right]_{z=0}.
\]
Since $\vartheta$ is odd, we again need to differentiate $\vartheta$ in \eqref{E:calF} exactly once,  so that by using \eqref{limit},
\begin{equation}
\label{E:F''0}
\frac{\partial}{\partial\overline{\tau}}\mathcal{F}^{''}(0)
=
-2\pi i q^{-\frac18}\eta(\tau)^3\left[\frac{\partial}{\partial z}\frac{\partial}{\partial\overline{\tau}}\left(\zeta^{\frac12} R\left(z-\frac{\tau}{2}-\frac14\right)\right)\right]_{z=0}.
\end{equation}

The derivatives can be simplified and rewritten as
$$
\left[\frac{\partial}{\partial z}\frac{\partial}{\partial\overline{\tau}}\left(\zeta^{\frac12}R\left(z-\frac{\tau}{2}-\frac12\right)\right)\right]_{z=0}
=\pi i\frac{\partial}{\partial\overline{\tau}}R\left(-\frac{\tau}{2}-\frac12\right)
+\frac{\partial}{\partial\overline{\tau}}\left[
\frac{\partial}{\partial z}R(z)\right]_{z=-\frac{\tau}{2}-\frac12}
$$
We now apply both \eqref{E:dtauR} and \eqref{E:dtaudzALT}, finding that \eqref{E:F''0} equals
\begin{align*}
\frac{\pi\eta(\tau)^3}{\sqrt{2}v^{\frac32}}\sum_{n\in\Z}e^{-\pi in^2\overline{\tau}}
=
\frac{\pi\eta(\tau)^3}{\sqrt{2}v^{\frac32}}
\frac{\eta\left(-2\overline{\tau}\right)^5}{\eta\left(-\overline{\tau}\right)^2\eta\left(-4\overline{\tau}\right)^2}.
\end{align*}
We have now shown that the action of $L$ is as claimed.

We are left to prove that $\frac{\mathcal{F}'(0)}{\eta^3}$ is a harmonic Maass form of weight $\frac12$ whose shadow is
as claimed. The transformation follows by a lengthy calculation with Lemmas \ref{THETtrans} and \ref{muhattranlem}.
The claim about the shadow follows from \eqref{4.1} by multiplying by $2iv^{\frac{1}{2}}\eta(\tau)^{-3}$ and conjugating.

\end{proof}

\end{document}